\newcommand*\circled[1]{\tikz[baseline=(char.base)]{
            \node[shape=circle,draw,inner sep=1pt] (char) {#1};}}
\let\c@table\c@figure 
\let\ftype@table\ftype@figure 
\DeclareMathOperator{\Diag}{Diag}
\DeclareMathOperator{\ldet}{ldet}
\DeclareMathOperator*{\argmax}{argmax}
\renewcommand*{\top}{%
  {\mathpalette\@transpose{}}%
}
\newcommand*{\@transpose}[2]{%
  \raisebox{\depth}{$\m@th#1\mathsf{T}$}%
}
\newtheorem{theorem}{Theorem}
\newtheorem{example}{Example}
\newtheorem{lemma}[theorem]{Lemma}
\newtheorem{proposition}[theorem]{Proposition}
\newtheorem{definition}{Definition}
\newtheorem*{remark}{Remark}
\def\smallColSep{\setlength{\arraycolsep}{1pt}}
\begin{document}
\begin{frontmatter}
  \title{Tridiagonal Maximum-Entropy Sampling  \\ and Tridiagonal Masks$\vphantom{|}^1$\fnref{myfootnote0}}
  \fntext[myfootnote0]{A short preliminary version of 
  part of this work appeared in the proceedings of \emph{LAGOS 2021}
  (XI Latin and American Algorithms, Graphs and Optimization Symposium).}
  
  \author{Hessa Al-Thani\fnref{myfootnote1}}
\fntext[myfootnote1]{The work of H. Al-Thani was made possible by a Graduate Sponsorship Research Award from the    Qatar National Research Fund (a member of Qatar Foundation). The findings herein reflect the work,     and are solely the responsibility, of the authors.}

  \author{Jon Lee\fnref{myfootnote2}}
\address{IOE Department, University of Michigan,
  Ann Arbor, Michigan, USA}
\fntext[myfootnote2]{J. Lee was supported in part by AFOSR grants FA9550-19-1-0175 and FA9550-22-1-0172.}


\begin{abstract} 
The NP-hard maximum-entropy sampling problem (MESP) seeks a maximum (log-)determinant 
principal submatrix, of a given order, from an
input covariance matrix $C$. 

We give an efficient dynamic-programming algorithm for MESP
when $C$ (or its inverse) is tridiagonal and generalize it to
the situation where the support graph of $C$ (or its inverse)
is a spider graph with a constant number of legs (and beyond). We give a class of 
arrowhead covariance matrices $C$ for which a natural greedy algorithm solves MESP.

A \emph{mask} $M$  for MESP is a correlation
matrix with which  we pre-process $C$, by taking the Hadamard product  $M\circ C$.
Upper bounds on MESP with $M\circ C$ give upper bounds on MESP with $C$.
Most upper-bounding methods are much faster to apply, when the input matrix is tridiagonal,
so we consider tridiagonal masks $M$ (which yield tridiagonal $M\circ C$). We make a detailed analysis of such tridiagonal masks, and 
develop a combinatorial local-search based upper-bounding method that takes advantage  of 
fast computations on tridiagonal matrices. 
\end{abstract}
\begin{keyword}
nonlinear combinatorial optimization,  covariance matrix,  differential entropy, maximum-entropy sampling, dynamic programming,  local search, spider, arrowhead, tridiagonal,  mask, correlation matrix
\end{keyword}

\journal{Discrete Applied Mathematics}
\date{revised \today}

\end{frontmatter}


\section{Introduction}\label{intro}

Let $C$ be an order-$n$ symmetric positive semidefinite real matrix, and let $s$ be an integer
satisfying $1\leq s\leq n$. In applications, $C$ is the covariance matrix
for a multivariate Gaussian random vector $Y_{N_n}$, where  $N_n:=[1,n]:=\{1,2,\ldots,n\}$
--- note that we regard $N_n$ as an ordered set, which will be important later. 
For nonempty $S\subseteq N_n$, let $C[S,S]$ denote the principal submatrix of $C$ indexed by $S$.
We denote $\log \det(\cdot)$ by $\ldet(\cdot)$. Up to constants, $\ldet C[S,S]$
is the (differential) entropy associated with the subvector $Y_S$ (see \cite{SW}, for example).
The \emph{maximum-entropy sampling problem}, defined by \cite{SW}, 
is
\begin{equation}
z=\max~ \left\{  \ldet C[S,S] ~:~ |S|=s,~ S\subseteq N_n\right\}\tag{MESP}\label{MESP}
\end{equation}
\ref{MESP} corresponds to choosing a maximum-entropy subvector $Y_S$ from $Y_N$,
subject to $|S|=s$. Later, it will sometimes be convenient to emphasize the dependence
of \ref{MESP} on the data, and in such situations we will write \ref{MESP}$(C,s)$ and $z(C,s)$.

\ref{MESP} was introduced in \cite{SW}, 
it was established to be NP-Hard in \cite{KLQ} (via reduction from the NP-Complete problem: does an $n$-vertex graph $G$ have a vertex packing of cardinality $s$), 
and there has been considerable work on algorithms. 
Viable approaches aimed at
exact solution of moderate-sized instances
employ branch-and-bound (see \cite{KLQ}). For use in
branch-and-bound, we have many methods for efficiently calculating good upper bounds; see
\cite{KLQ,AFLW_IPCO,AFLW_Using,HLW,LeeWilliamsILP,AnstreicherLee_Masked,BurerLee,Anstreicher_BQP_entropy,Kurt_linx,Mixing,Nikolov,Weijun,fact}
and the survey \cite{LeeEnv}. Notably, we have developed an \texttt{R} package
providing an easy means for instantiating \ref{MESP} from raw environmental-monitoring data
(see \cite{Rpackage,Al-ThaniLee1}).

In what follows, ``$\circ$'' denotes Hadamard (i.e., elementwise) product
of a pair of matrices of the same dimensions.
Given a positive integer $n$, we define a \emph{mask} as any $n\times n$ symmetric positive semidefinite matrix
$M$ with all ones on the diagonal. Masks are better known as ``correlation matrices''. 
These were introduced for \ref{MESP} in \cite{AnstreicherLee_Masked}, where it was observed that for any $S$ and mask $M$,
$\ldet C[S,S] \leq \ldet (C\circ M)[S,S]$, and so any upper bound on \ref{MESP} for
$C\circ M$ is valid for the \ref{MESP} on $C$.
It is a challenge to find a good mask with respect to a
particular \ref{MESP} upper-bounding method --- i.e., a mask $M$ that minimizes the upper bound on $z(C\circ M,s)$. This topic was investigated in \cite{AnstreicherLee_Masked}
and \cite{BurerLee}.
We define a \emph{combinatorial mask}
as any block-diagonal mask $M$ where the diagonal blocks (which may vary in size) are
matrices of all ones. At the extremes, we have $M:=I_n$ (an identity matrix) and $M=:J_n$ (an all ones matrix). As we observed in 
\cite{AnstreicherLee_Masked}, we can view some of \cite{HLW} and  \cite{LeeWilliamsILP} in this way; 
e.g., the ``spectral partition bound'' of \cite{HLW}. But there are many possibilities for masks that 
are not combinatorial masks. For example, a (tridiagonal) \emph{$\frac{1}{2}$-mask} $M$ has $M_{i,i+1}=M_{i+1,i}:= 
\frac{1}{2}$ for $i=1,\ldots,n-1$ (and all other off-diagonal entries are 0). 
One goal of ours is 
to investigate computational aspects of tridiagonal masks $M$, so as to take advantage
of the fact that most bounds can be calculated much faster (than on dense matrices)
for tridiagonal matrices (in our case,
$M\circ C$). 


In Section \ref{sec:tridiagC}, we give an $\mathcal{O}(n^5)$ dynamic-programming algorithm for \ref{MESP} when $C$ is 
tridiagonal. This also solves the problem in the same complexity when instead $C^{-1}$
is tridiagonal. This is the first progress on identifying significant polynomially-solvable
cases of \ref{MESP}. We extend our result to the case where the
support graph of $C$ is a spider with a constant number of legs, and we indicate how
it can be further extended when the number of connected components
of the support graph of $C$ is polynomial in $n$. 
Finally, we present the results of computational experiments
on matrices having support graphs that are spiders, 
indicating the superiority of a parallel implementation of
our dynamic-programming algorithm as compared with branch-and-bound.

In Section \ref{sec:stars}, we characterize a class of 
positive-semidefinite ``arrowhead matrices''
(i.e., having the support graph of $C$ being a star), such that a certain natural
greedy algorithm optimally solves \ref{MESP}. 

In Section \ref{sec:tridiagM},
we characterize for each $n$, masks that differ from $\frac{1}{2}$-masks in two (symmetric) pairs
of off-diagonal positions. These results are useful in identifying good masks for \ref{MESP} bounds, in the context of local search in the space of masks.

In  Section \ref{sec:masklocalsearch}, we develop a combinatorial local-search algorithm that seeks a good
tridiagonal mask $M$ for the so-called `linx' upper bound (which is one of the best known upper-bounding method for \ref{MESP}).
Some computational experiments validate our approach.

\section{Tridiagonal covariance matrices}\label{sec:tridiagC} 

The inverse covariance matrix, known as the \emph{precision matrix}, 
captures the conditional covariances between pairs of random variables,
conditioning on the remaining $n-2$ random variables. 
For a spatial process with random variables  placed in a line, a reasonable 
model may have the precision matrix being tridiagonal, 
when the random variables are numbered in a 
natural order (along the line);
intuitively, such a model would assume that  no extra information can be obtained from a non-neighbor of a random variable $Y_i$, over and above the 
information obtainable from the neighbors of $Y_i$. 

To see how the inverse covariance matrix
plays a role for \ref{MESP}, we employ the   identity 
\[
\det C[S,S] = \det C \times \det C^{-1}[N_n\setminus S,N_n\setminus S] 
\]
(see \cite[Section 0.8.4]{HJBook}). 
With this identity,  we have $z(C,s)=\ldet C + z(C^{-1},n-s)$, and so
the \ref{MESP} for choosing $s$ elements with respect to $C$ is
equivalent to the \ref{MESP}  for choosing $n-s$ elements with respect to $C^{-1}$.

The determinant of a tridiagonal matrix 
can be calculated in linear time, via a simple recursion  (see \cite{demmel1997}),
which we write for the symmetric case as that is our need.
Let $T_1=(a_1)$, and  for $r\geq 2$, let
\[
T_r:=\left(
  \begin{array}{ccccc}
    a_1 & b_1 &  &  &  \\
    b_1 & a_2 & b_2 &  &  \\
     & b_2 & \ddots & \ddots &  \\
     &  & \ddots & \ddots & b_{r-1} \\
     &  &  & b_{r-1} & a_r \\
  \end{array}
\right).
\]
\begin{lemma}\label{lem:tridet}
Defining  $\det T_0:=1$, we have 
 $\det T_r =: a_r \det T_{r-1} - b_{r-1}^2 \det T_{r-2}$,
for  $r\geq 2$. 
\end{lemma}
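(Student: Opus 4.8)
The plan is to prove the determinant recursion by cofactor (Laplace) expansion along the last row or column of $T_r$. This is the natural approach because the tridiagonal structure means only two entries in the last row are nonzero, so the expansion collapses to just two terms, which is exactly the shape of the claimed recursion.

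First I would expand $\det T_r$ along the final row $(0,\ldots,0,b_{r-1},a_r)$. Only the entries in columns $r-1$ and $r$ are nonzero, so the Laplace expansion yields
\[
\det T_r = (-1)^{r+r} a_r \det M_{r,r} + (-1)^{r+(r-1)} b_{r-1} \det M_{r,r-1},
\]
where $M_{r,r}$ and $M_{r,r-1}$ denote the indicated minors (the submatrices obtained by deleting row $r$ and the stated column). The first minor $M_{r,r}$ is obtained by deleting the last row and column, which is precisely $T_{r-1}$, giving the term $a_r \det T_{r-1}$. The sign on the second term is $(-1)^{2r-1}=-1$, which accounts for the minus sign in the recursion.

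The second step is to identify $\det M_{r,r-1}$. The matrix $M_{r,r-1}$ is obtained from $T_r$ by deleting row $r$ and column $r-1$; its last column then contains only the single entry $b_{r-1}$ in its bottom position. I would expand this minor along that last column, picking up another factor of $b_{r-1}$ and leaving behind exactly $T_{r-2}$ (the leading principal submatrix of order $r-2$). This produces the factor $b_{r-1}^2 \det T_{r-2}$, completing the recursion. Here one should be slightly careful with the sign bookkeeping in the nested expansion, confirming that the second cofactor contributes a $+1$ so that the overall sign remains $-b_{r-1}^2$; this sign tracking across the two successive expansions is the one place where an error could creep in, and is the main (modest) obstacle.

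Finally, the base cases need a brief check: $\det T_1 = a_1$ is immediate, and with the convention $\det T_0 := 1$ the recursion correctly reproduces $\det T_2 = a_2 a_1 - b_1^2$, matching direct computation. Since the recursion holds for all $r \geq 2$ by the cofactor argument and the stated conventions make it consistent at the boundary, the proof is complete. The entire argument is elementary; its only subtlety is the careful tracking of cofactor signs in the two-level expansion.
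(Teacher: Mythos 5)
Your proof is correct: the two-level cofactor expansion along the last row and then the last column of the minor is the standard argument for this recursion, and your sign bookkeeping ($(-1)^{2r-1}=-1$ for the off-diagonal term, $+1$ for the nested cofactor) and base cases all check out. The paper itself gives no proof, merely citing the literature for this classical fact, so your argument supplies exactly the standard derivation being referenced.
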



\begin{theorem}
\ref{MESP} is polynomially solvable when $C$ or $C^{-1}$ is tridiagonal, or when there is a symmetric
permutation of $C$ or $C^{-1}$ so that it is tridiagonal. 
\end{theorem}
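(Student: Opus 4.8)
The plan is to reduce \ref{MESP} on a tridiagonal $C$ to a one-dimensional interval-selection problem solvable by dynamic programming, and then to dispatch the remaining three cases by easy reductions to this one.

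First I would record the structural observation that drives everything. Fix $S\subseteq N_n$ and write it as a union of \emph{maximal runs} of consecutive integers, $S=R_1\cup\cdots\cup R_t$, where each $R_\ell$ is an interval $[i_\ell,j_\ell]$ and consecutive runs are separated by at least one omitted index. Because $C$ is tridiagonal, its only nonzero off-diagonal entries join indices differing by exactly $1$. Hence within a run the submatrix $C[R_\ell,R_\ell]$ is itself tridiagonal, while any two indices lying in different runs differ by at least $2$ and therefore contribute a zero entry to $C[S,S]$. Consequently $C[S,S]$ is block diagonal, with the blocks $C[R_1,R_1],\ldots,C[R_t,R_t]$ appearing in their natural order, so that
\[
\ldet C[S,S]=\sum_{\ell=1}^{t}\ldet C[R_\ell,R_\ell].
\]
This additivity over runs is exactly what makes the problem separable.

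Next I would turn this into a dynamic program. For every interval $[i,j]\subseteq N_n$ I would precompute the weight $w(i,j):=\ldet C[\{i,\ldots,j\},\{i,\ldots,j\}]$; fixing the left endpoint $i$ and extending $j$, the recursion of Lemma~\ref{lem:tridet} produces all determinants $\det C[\{i,\ldots,j\}]$ in linear time by carrying the last two values, so all $\mathcal{O}(n^2)$ weights are obtained in polynomial time (a singular block simply receives weight $-\infty$ and is dominated). The optimization then reads: choose pairwise disjoint, pairwise \emph{non-adjacent} intervals whose lengths sum to $s$ so as to maximize the total weight, where the non-adjacency (a forced gap between chosen intervals) guarantees that the intervals really are the maximal runs of the resulting $S$. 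I would solve this by a left-to-right dynamic program whose states record the current index, the number of elements chosen so far, and a bit marking whether the current index is selected; closing a run that ends at $i$ consults the precomputed $w(j,i)$ together with the best value on the prefix ending at $j-2$, thereby enforcing the separating gap at $j-1$. The number of states is $\mathcal{O}(ns)$ and each is updated in $\mathcal{O}(n)$ time, so the procedure is polynomial, and an optimal $S$ is recovered by standard backtracking.

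It remains to reduce the other cases. If $C^{-1}$ is tridiagonal, I would invoke the complementarity identity $z(C,s)=\ldet C + z(C^{-1},n-s)$ noted above: form $C^{-1}$, solve \ref{MESP}$(C^{-1},n-s)$ by the tridiagonal algorithm, and add the constant $\ldet C$. For the permutation cases I would use that \ref{MESP} is invariant under symmetric permutation, since $\det(P^\top C P)[S,S]=\det C[\pi(S),\pi(S)]$ for the relabeling $\pi$ encoded by $P$, whence $z(P^\top C P,s)=z(C,s)$; a symmetric permutation bringing a symmetric matrix to tridiagonal form exists precisely when its support graph is a disjoint union of paths, a property that is recognizable and whose linear layout is constructible in polynomial time, after which the tridiagonal (or, combined with the identity, the inverse-tridiagonal) algorithm applies. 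The one genuinely structural step is the decomposition in the second paragraph; everything afterwards is bookkeeping. Note in particular that $\ldet C[R,R]$ is \emph{not} an additive function of the individual indices of a run $R$, which is why the interval weights must be precomputed rather than accumulated one index at a time inside the dynamic program.
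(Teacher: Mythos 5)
Your proposal is correct and follows essentially the same route as the paper: decompose $S$ into maximal contiguous runs so that $\ldet C[S,S]$ is additive over runs, precompute all interval log-determinants via the tridiagonal recursion, and run a dynamic program over the last run with a forced one-index gap, handling the $C^{-1}$ case by the complementarity identity. Your bookkeeping of states by (position, count) rather than by the last piece $[k,\ell]$ even yields a slightly better running time than the paper's $\mathcal{O}(n^5)$, and your explicit treatment of the symmetric-permutation case is a detail the paper leaves implicit.
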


\begin{proof} 
Suppose that $C$ is tridiagonal. Let
$S$ be an ordered subset of $N_n$. Then we can write 
$C[S,S]$ uniquely as  
$C[S,S]=\Diag(C[S_1,S_1],C[S_2,S_2], \ldots, C[S_p,S_p])$,
with $p\geq 1$,
where each $S_i$ is a \emph{maximal ordered contiguous subset} of $S$, 
and for all $1\leq i<j\leq p$, all elements of $S_i$  are less than all elements of $S_j$. 
We call the $S_i$ the \emph{pieces} of $S$, and in particular $S_p$ is the \emph{last piece}. 
It is easy to see
that 
\[
\det C[S,S] = \textstyle  \prod_{i=1}^p \det C[S_i,S_i] = \det C[S_p,S_p] \times \det C[S\setminus S_p, S\setminus S_p]~.
\]
Of course every $S$ has a last piece, and for an optimal $S$ to \ref{MESP}, if the last piece is $S_p=:[k,\ell]$,
then we have the ``principle of optimality'':
\[
\ldet C[S\setminus [k,\ell],S\setminus [k,\ell]]= z(C[N_{k-2},N_{k-2}], s-(\ell-k+1)).
\]

With this way of thinking, we define
\begin{align*} 
f(k,\ell,t):=& \max \Big\{
\ldet C[S,S] ~:~ \\
&\qquad\qquad |S|=t,~ S\subset N_n~, \mbox{ and the last piece of $S$ is } [k,\ell]
\Big\},
\end{align*}
for $1\leq \ell-k+1\leq t \leq s$.
Note that the last block of $S$ (in the definition of $f$) has $\ell-k+1$ elements,
which is why $t$ must be at least that large. 
We have that
\[
z(C,s) = \max_{k,\ell} \left\{ 
f(k,\ell,s) ~:~ 1\leq k\leq \ell\leq n,~ \ell-k+1\leq s 
\right\},
\]
where we are simply maximizing over the possible (quadratic number of) last pieces. 

Our dynamic-programming recursion is
\begin{align*}
f(k,\ell,t) = \ldet C[[k,\ell],[k,\ell]] + &
\max_{i, j} \left\{ 
f(i,j,t-(\ell-k+1)) ~:~ \right. \\
& \left. 1\leq i\ \leq j\leq k-2,~ j-i+1 \leq t-(\ell-k+1)
\right\},
\end{align*}
The idea is that if $[k,\ell]$ is the last block of an 
optimal selection of $t$ elements, then we have to pick $t-(\ell-k+1)$ more elements,
and \emph{element $k-1$ is ruled out}  (because $[k,\ell]$ is maximal).

To get the recursion started, we calculate
\[
f(k,\ell,\ell-k+1)=\ldet C[[k,\ell],[k,\ell]],
\]
for $1\leq k \leq \ell \leq n$, $\ell-k+1\leq s$, 
observing that in such a boundary case, there is only one feasible solution.
Already, it appears that the initialization requires $\mathcal{O}(n^5)$ basic arithmetic operations,
but in fact we can do this part in $\mathcal{O}(n^2)$ operations, using the tridiagonal-determinant formula (Lemma \ref{lem:tridet}).

Next, we compute, using the recursion, for $t=1,2,\ldots,s$,
$f(k,\ell,t)$ for all $1\leq k \leq \ell \leq n$ such that $\ell-k+1 <t$.
It is not hard to see that this gives an $\mathcal{O}(n^5)$ algorithm for \ref{MESP}, when 
$C$ is tridiagonal, and by earlier observations, we also get an 
$\mathcal{O}(n^5)$ algorithm for \ref{MESP}, when  $C^{-1}$ is tridiagonal. 
\end{proof}

For an arbitrary symmetric $C$, with rows and columns indexed from $N_n$\thinspace,
we consider the \emph{support graph} $G(C)$, with node set $V(G[C]):=N_n$\thinspace, and
edge set 
$E(G(C)):=\{(i,j) ~:~ i,j\in N_n\thinspace,~ i<j,~ C[i,j]\not= 0\}$. 
If $C$ is a generic tridiagonal matrix (i.e., $C[i,i+1]\not=0$ for $i\in N_{n-1}$), then  $G(C)$ is the
path $P_n:=$ \circled{1}--\circled{2}-- $\cdots$--\circled{$n$}.  
If $C$ is tridiagonal but
not generic, then $G(C)$ is a subgraph of $P_n$\thinspace. 
Because we can efficiently solve \ref{MESP} when $C$ is tridiagonal (and also when $C$ is tridiagonal after symmetric permutation),
it is natural to consider broader classes of $C$, and natural exploitable structure is
encoded in the support graph $G(C)$. 

We are interested in ``spiders'' with $r\geq 1$ legs
on an n-vertex set (see \cite{MARTIN2008}, for example): for convenience,
we let the vertex set be $N_n$, and let vertex 1 be the \emph{body} of the spider; the non-body vertex set $V_i$ 
of \emph{leg} $i$, is a non-empty 
contiguously numbered subset of $N_n \setminus\{1\}$,
such that distinct $V_i$ do not intersect, and the union of all $V_i$ is $N_n\setminus \{1\}$; 
 we number the legs $i$ in such a way that: (i) the minimum element of $V_1$ is 2, and (ii)
the minimum element of $V_{i+1}$ is 
one plus the maximum element of $V_i$, for $i\in[1,r-1]$. A small example clarifies all of this; see Figures \ref{fig:spider}--\ref{fig:matrix}.
Note that with one leg, the spider is the path $P_n$\thinspace, and with two 
legs, the vertices can be re-numbered so that the the spider is the path $P_n$\thinspace. So, we may as well assume that the spider has $r\geq 3$ legs.

\begin{figure}[ht]
  \centering
\includegraphics[width =0.7\textwidth]{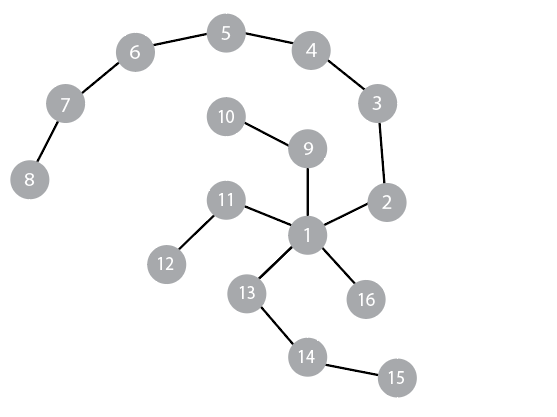}
  \caption{Spider with five legs}\label{fig:spider}
\end{figure}

\begin{figure}
\[\begingroup\smallColSep
\left[\begin{array}{cccccccccccccccc}
\scriptscriptstyle{(1,1)}  &  \scriptscriptstyle{(1,2)}  & \scriptstyle{0} & \scriptstyle{0}& \scriptstyle{0} & \scriptstyle{0}& \scriptstyle{0}& \scriptstyle{0}  &\scriptscriptstyle{(1,9)} & \scriptstyle{0}    & \scriptscriptstyle{(1,11)} &\scriptstyle{0}& \scriptscriptstyle{(1,13)}&\scriptstyle{0}&\scriptstyle{0}&\scriptscriptstyle{(1,16)}\\ 
     \scriptscriptstyle{(2,1)} & \scriptscriptstyle{(2,2)}& \scriptscriptstyle{(2,3)} &  \scriptstyle{0} & \scriptstyle{0}& \scriptstyle{0}&\scriptstyle{0} & \scriptstyle{0} & \scriptstyle{0} & \scriptstyle{0} & \scriptstyle{0} &\scriptstyle{0}&\scriptstyle{0}&\scriptstyle{0}&\scriptstyle{0}&\scriptstyle{0}\\         
     \scriptstyle{0} & \scriptscriptstyle{(3,2)} & \scriptscriptstyle{(3,3)} & \scriptscriptstyle{(3,4)} & \scriptstyle{0}& \scriptstyle{0}&\scriptstyle{0} & \scriptstyle{0}&\scriptstyle{0} & \scriptstyle{0} & \scriptstyle{0}      &\scriptstyle{0}&\scriptstyle{0}&\scriptstyle{0}&\scriptstyle{0}&\scriptstyle{0}\\    
      \scriptstyle{0}  & \scriptstyle{0}& \scriptscriptstyle{(4,3)} &      \scriptscriptstyle{(4,4)} & \scriptscriptstyle{(4,5)} & \scriptstyle{0}& \scriptstyle{0}  &\scriptstyle{0} &\scriptstyle{0} & \scriptstyle{0} & \scriptstyle{0}            &\scriptstyle{0}&\scriptstyle{0}&\scriptstyle{0}&\scriptstyle{0}&\scriptstyle{0}\\ 
      
    \scriptstyle{0}    & \scriptstyle{0}& \scriptstyle{0}& \scriptscriptstyle{(5,4)} &       \scriptscriptstyle{(5,5)}  & \scriptscriptstyle{(5,6)}  &  \scriptstyle{0} & \scriptstyle{0} & \scriptstyle{0}&\scriptstyle{0} & \scriptstyle{0}    &\scriptstyle{0}&\scriptstyle{0}&\scriptstyle{0}&\scriptstyle{0}&\scriptstyle{0}\\ 
    \scriptstyle{0}       & \scriptstyle{0} & \scriptstyle{0} & \scriptstyle{0}& \scriptscriptstyle{(6,5)}  &  \scriptscriptstyle{(6,6)} & \scriptscriptstyle{(6,7)}  & \scriptstyle{0} &\scriptstyle{0} & \scriptstyle{0} & \scriptstyle{0}& \scriptstyle{0} & \scriptstyle{0}& \scriptstyle{0}& \scriptstyle{0}&\scriptstyle{0} \\ 
    \scriptstyle{0}       &\scriptstyle{0} & \scriptstyle{0}& \scriptstyle{0}& \scriptstyle{0}& \scriptscriptstyle{(7,8)}  &    \scriptscriptstyle{(7,7)}  & \scriptscriptstyle{(7,8)}  & \scriptstyle{0}& \scriptstyle{0}& \scriptstyle{0}&\scriptstyle{0}      &\scriptstyle{0}&\scriptstyle{0}&\scriptstyle{0}&\scriptstyle{0}\\ 
    \scriptstyle{0}  & \scriptstyle{0} & \scriptstyle{0} & \scriptstyle{0} & \scriptstyle{0} & \scriptstyle{0} & \scriptscriptstyle{(8,7)}   & \scriptscriptstyle{(8,8)}  & \scriptstyle{0}  & \scriptstyle{0} & \scriptstyle{0}& \scriptstyle{0}& \scriptstyle{0}&\scriptstyle{0}&\scriptstyle{0}&\scriptstyle{0}\\ 
    \scriptscriptstyle{(9,1)} & \scriptstyle{0}& \scriptstyle{0} & \scriptstyle{0} & \scriptstyle{0} & \scriptstyle{0} & \scriptstyle{0} & \scriptstyle{0}   & \scriptscriptstyle{(9,9)}  & \scriptscriptstyle{(9,10)}  & \scriptstyle{0}& \scriptstyle{0} & \scriptstyle{0} &\scriptstyle{0}&\scriptstyle{0}&\scriptstyle{0}\\ 
      \scriptstyle{0}     & \scriptstyle{0} & \scriptstyle{0} &\scriptstyle{0} & \scriptstyle{0}& \scriptstyle{0}&\scriptstyle{0} & \scriptstyle{0} & \scriptscriptstyle{(10,9)}  &\scriptscriptstyle{(10,10)}  & \scriptstyle{0}   & \scriptstyle{0}    &\scriptstyle{0}&\scriptstyle{0}&\scriptstyle{0}&\scriptstyle{0}\\  
     \scriptscriptstyle{(11,1)}  & \scriptstyle{0}& \scriptstyle{0}&  \scriptstyle{0}&\scriptstyle{0} & \scriptstyle{0}& \scriptstyle{0} & \scriptstyle{0} & \scriptstyle{0} &    \scriptstyle{0}   &   \scriptscriptstyle{(11,11)}    & \scriptscriptstyle{(11,12)}  &\scriptstyle{0}&\scriptstyle{0}&\scriptstyle{0}&\scriptstyle{0}\\ 
     \scriptstyle{0}   & \scriptstyle{0}& \scriptstyle{0}& \scriptstyle{0}&  \scriptstyle{0} & \scriptstyle{0} & \scriptstyle{0}  & \scriptstyle{0} & \scriptstyle{0} & \scriptstyle{0}& \scriptscriptstyle{(12,11)}  &   \scriptscriptstyle{(12,12)} &  \scriptstyle{0} & \scriptstyle{0}& \scriptstyle{0}&\scriptstyle{0} \\   
   \scriptscriptstyle{(13,1)}       & \scriptstyle{0} & \scriptstyle{0} & \scriptstyle{0}&  \scriptstyle{0} & \scriptstyle{0} & \scriptstyle{0} & \scriptstyle{0} & \scriptstyle{0} & \scriptstyle{0}& \scriptstyle{0} & \scriptstyle{0} & \scriptscriptstyle{(13,13)} &        \scriptscriptstyle{(13,14)} &   \scriptstyle{0}& \scriptstyle{0} \\  
          \scriptstyle{0} &\scriptstyle{0} & \scriptstyle{0}& \scriptstyle{0}& \scriptstyle{0}&  \scriptstyle{0} & \scriptstyle{0} & \scriptstyle{0} & \scriptstyle{0} & \scriptstyle{0}& \scriptstyle{0} & \scriptstyle{0}& \scriptscriptstyle{(14,13)}  & \scriptscriptstyle{(14,14)} & \scriptscriptstyle{(14,15)}     &\scriptstyle{0}   \\  
        \scriptstyle{0} &  \scriptstyle{0} & \scriptstyle{0}&  \scriptstyle{0}& \scriptstyle{0} & \scriptstyle{0} & \scriptstyle{0} & \scriptstyle{0} & \scriptstyle{0} & \scriptstyle{0}& \scriptstyle{0} & \scriptstyle{0}&    \scriptstyle{0}   & \scriptscriptstyle{(15,14)}  & \scriptscriptstyle{(15,15)}   & \scriptstyle{0}   \\  
      \scriptscriptstyle{(16,1)}   & \scriptstyle{0}& \scriptstyle{0}& \scriptstyle{0}& \scriptstyle{0} & \scriptstyle{0} & \scriptstyle{0} & \scriptstyle{0} & \scriptstyle{0} & \scriptstyle{0}& \scriptstyle{0} & \scriptstyle{0}& \scriptstyle{0}& \scriptstyle{0}& \scriptstyle{0}           &\scriptscriptstyle{(16,16)}      
\end{array} \right]
\endgroup\]
  \caption{Matrix structure corresponding to Figure \ref{fig:spider}}\label{fig:matrix}
\end{figure}

Consider how a \ref{MESP} solution $S$ intersects with the vertices of the spider.
As before, the solution has pieces. Note how at most one piece
contains  the body, and every other piece is a contiguous set
of vertices of a leg. The number of distinct possible pieces 
containing the body is $\mathcal{O}(n^r)$. And the number
of other pieces is $\mathcal{O}(n^2)$. Overall, we have 
$\mathcal{O}(n^r)$ pieces. In any solution, we can order the pieces
by the minimum vertex in each piece. Based on this, we have a 
well defined last piece. From this, we can devise an efficient dynamic-programming algorithm, when we consider $r$ to be constant. 

\begin{theorem}
\ref{MESP} is polynomially solvable when $G(C)$ or $G(C^{-1})$ is a  spider with a constant number of legs.
\end{theorem}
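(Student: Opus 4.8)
The plan is to reduce to the case where $G(C)$ is itself the spider (the $G(C^{-1})$ case follows from the complementation identity $z(C,s)=\ldet C + z(C^{-1},n-s)$ established above, and an arbitrary matrix whose support graph is a spider can be relabeled into the canonical leg-numbering), and then to build a dynamic program that generalizes the tridiagonal recursion by first conditioning on the unique piece that can contain the body. First I would record the structural facts noted before the statement: for any $S$, the matrix $C[S,S]$ is block diagonal after a symmetric permutation, with one block per connected component of the induced support graph $G(C)[S]$, so $\det C[S,S]=\prod_P \det C[P,P]$ over the pieces $P$; at most one piece --- the \emph{body piece} $B$ --- contains vertex $1$, and such a $B$ is the body together with, for each leg $i$, a (possibly empty) maximal initial run of $c_i$ selected vertices of $V_i$; every other piece is a contiguous run inside a single leg.

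The key step is that \emph{conditioning on $B$ decouples the legs}. Having fixed $B$ --- equivalently the vector $(c_1,\dots,c_r)$ with $c_i\in\{0,1,\dots,|V_i|\}$, together with the separate case in which the body is not selected --- the vertex just past the initial run on each leg is forced out (to keep $B$ maximal), and the remaining available vertices of leg $i$ form a sub-path $Q_i$, a suffix of $V_i$, disjoint from the other legs. Since $G(C)$ has no edges between distinct $Q_i$, the pieces drawn from different legs contribute independent, additively-combining terms to $\ldet C[S,S]$. Hence, for a fixed $B$ and a fixed split $t_1,\dots,t_r$ of the residual budget $s-1-\sum_i c_i$, the optimal value is $\ldet C[B,B]+\sum_{i=1}^r g_i(t_i)$, where $g_i(t)$ (which depends on the chosen suffix $Q_i$) is the optimum of the path-\ref{MESP} of choosing $t$ vertices from $Q_i$ --- a quantity obtainable from the tridiagonal dynamic program of the preceding theorem applied to $Q_i$.

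So the algorithm has three layers. I would first precompute, for every leg $i$, every suffix $Q_i$ (there are $\mathcal{O}(n)$ of them per leg), and every cardinality $t$, the values $g_i(t)$, using Lemma \ref{lem:tridet} and the tridiagonal recursion; this is polynomial. Next, for each of the $\mathcal{O}(n^r)$ choices of $B$, I would solve the resource-allocation problem $\max\{\sum_i g_i(t_i)\colon \sum_i t_i = s-1-\sum_i c_i,\ t_i\ge 0\}$ by a standard one-leg-at-a-time dynamic program that convolves the tables $g_1,\dots,g_r$ in $\mathcal{O}(r n^2)$ time, then add $\ldet C[B,B]$; the (single) case in which the body is unselected is handled by the same allocation run over the full legs $V_i$. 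Taking the maximum over all $B$ yields $z(C,s)$. The total running time is bounded by a polynomial in $n$ for each fixed $r$ (of order $n^{r}$ times lower-order factors), which gives polynomial solvability.

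The main obstacle, and the reason the pure ``peel off the last piece'' argument of the preceding theorem does not apply verbatim, is exactly the body: because vertex $1$ is adjacent to the first vertex of every leg, the body piece can simultaneously contain high-numbered vertices from all $r$ legs, so the legs are genuinely coupled and a naive last-piece-in-min-vertex-order recursion breaks at the body. The crux is to recognize that this coupling is captured entirely by the single combinatorial object $B$, that the number of admissible $B$ is only $\prod_i(|V_i|+1)=\mathcal{O}(n^r)$, and that once $B$ is fixed the block-diagonal factorization of $\det C[S,S]$ renders the $r$ leg-tail subproblems independent path-\ref{MESP}s that recombine by a simple budget convolution.
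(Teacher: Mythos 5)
Your proposal is correct and follows essentially the same route as the paper: the paper's argument also enumerates the $\mathcal{O}(n^r)$ possible body-containing pieces and then (in its ``parallel'' organization) solves each leg's residual path-\ref{MESP} independently for every budget before recombining, which is exactly your conditioning on $B$ followed by a budget convolution over the decoupled leg suffixes. Your write-up in fact makes explicit the decoupling and maximality details (the forced-out vertex after each initial run) that the paper only sketches.
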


In fact, we can easily organize a dynamic-programming scheme to exploit parallel computation. We can compute $\ldet C[S_1,S_1]$ for all possible pieces containing the body 1. In parallel, we can compute optimal \ref{MESP} solution values for all possible budgets $t\leq s$ for each leg, keeping track of the minimum vertex used for each such solution. With all of that information, we can then calculate an overall optimal solution. 

We conducted some experiments to get some evidence that our dynamic-programming
algorithm can be practical, compared to branch-and-bound.
For each of $k=13,18,23,28,33,38,43$, we constructed ten  positive-semidefinite 
matrices $C$, with $G(C)$ being a spider with three legs and $k$ vertices per leg. So we have $n=3k+1$, and we chose $s\sim \ell n/4$, for $\ell=1,2,3$.
For the matrix constructions, we built a \texttt{cvx}  (see \cite{cvx}) semidefinite-programming model 
that takes a random choice of positive diagonal elements (constructed using \texttt{rand} of \texttt{Matlab}). The objective of the semidefinite-program was to maximize the sum of the off-diagonal elements of $C$, subject to  $C$ is positive semidefinite, and $G(C)$ being the spider described above. 

In Table \ref{tab:spidercomp}, we report on our numerical experiments. 
We compare our parallel dynamic-programming algorithm,
implemented in \texttt{Matlab}, 
with the serial branch-and-bound  \texttt{Matlab} code of \cite{Kurt_linx}, 
which in turn uses the conic solver \texttt{SDPT3} (see \cite{Toh2012,SDPT3}). 
For our dynamic-programming algorithm, we use the \texttt{Matlab} parallel for-loop instruction \texttt{parfor}, for easy parallelization across spider legs.
Parallelizing branch-and-bound would be a much more difficult task, and
load balancing is highly nontrivial.
We note that the running time of \texttt{Matlab} is highly variable (particularly for  branch-and-bound), even with the same data and deterministic implementations of the algorithms.
But because we average over 
 ten experiments for each choice of $n$ and $s$, our results are meaningful. 
In the individual experiments it is clear that the dynamic-programming algorithm
has more consistent time performance, while the time taken by the branch-and-bound 
fluctuates considerably.
For most of the experiments, we set an upper limit of 2 hours (7200 seconds),
except for the hardest one, the $n=130$, $s=97$ experiment, where we increased the time limit to 4 hours. In the table, $*$ indicates that the time limit was reached. 
Overall the dynamic-programming algorithm performed much better than branch-and-bound,
and it scales much better.


\begin{table}[ht!]
 \centering
\begin{tabular}{ |c|c|c|c|c|c| } 
 \hline
   &  & No. of  & Avg. time for  & Avg. time for \\ 
   n (k)& s & trials &  DP (sec.) &  B\&B   (sec.) \\ 
 \hline
 40 (13) & 10 & 10  & 
 $<1$
 & 90  \\ 
 40 (13) & 20 & 10  & 
 5
 & 21  \\ 
 40 (13) & 30 & 10  & 7 & 678  \\ 
 \hline
 55 (18) & 13 & 10  & 
 1
 & 5642 \\ 
 55 (18) & 27 & 10  & 21 & 173 \\ 
 55 (18) & 41 & 10  & 33 & 8150 \\ 
 \hline
  70 (23)& 17 & 10  & 
  4
  & $7200^*$  \\ 
 70 (23)& 35 & 10  & 86 & 241  \\ 
  70 (23)& 52 & 10  & 146 & $7200^*$  \\ 
  \hline
 85 (28)& 21 & 10 & 12 &  $7200^*$  \\ 
  85 (28)& 42 & 10 & 260 &  347  \\ 
   85 (28)& 63 & 10 & 519 &  $7200^*$  \\ 
  \hline
 100 (33)& 25 & 10 & 32 & $7200^*$  \\ 
 100 (33)& 50 & 10 & 858 &  $7200^*$ \\ 
 100 (33)& 75 & 10 & 1364 & $7200^*$  \\ 
 \hline
 115 (38) & 28 & 10 & 68 & $7200^*$ \\ 
 115 (38) & 57 & 10 & 1654 & $7200^*$ \\ 
 115 (38) & 86 & 10 & 3250 & $7200^*$ \\ 
 \hline
  130 (43) & 32 & 10 & 163 & $7200^*$ \\ 
  130 (43) & 65 & 10 & 4086 & $7200^*$ \\ 
  130 (43) & 97 & 10 & 7093 & $14400^*$ \\ 
 \hline
\end{tabular}
\caption{DP vs B{\&}B on spiders}
\label{tab:spidercomp}
\end{table}

\FloatBarrier





Finally, we observe that for any class of $n$-vertex graphs for which the number of \emph{connected components} is bounded above by a polynomial in $n$, we can use the same ideas as above to build an efficient dynamic-programming algorithm, assuming that we can enumerate the connected components in polynomial time. We do note that the class of $n$-vertex ``stars'' (i.e., spiders with $n-1$ single-edge legs)  has $2^{n-1}$ subtrees
containing the body,
and so we do not in this way get an efficient algorithm for \ref{MESP} when $G(C)$ is a star. But see the next section for an efficient algorithm for a subclass of the positive-semidefinite $C$ for which 
$G(C)$ is a star. 

We could perhaps hope that what is generally needed for a tree (to lead to an efficient dynamic programming algorithm of this type)
is a degree bound. But even for a degree bound of three, we get bad behavior: for binary trees, the number of subtrees can be exponential in the number of vertices.\footnote{Consider a full binary tree with $\ell$ levels, and hence
$2^\ell-1$ vertices. Such a graph has $2^{\ell-1}$ chains (on $\ell-1$ edges)
from the top to the bottom. Taking the union of any subset of these chains
gives a distinct subtree, so we have at least $2^{2^{\ell-1}}$
subtrees, which is exponential in the number of vertices.
}

\section{Stars}\label{sec:stars}

In this section, we present an algorithm that computes the exact optimum
of \ref{MESP}, when $C$ is an arrowhead matrix (i.e., 
when the support graph $G(C)$ is a star with center 1)
under an easily-checkable sufficient condition.

We define the \emph{arrowhead} matrix
 \[
 A(\alpha_1,\alpha,D) := \left( \begin{array}{cc}
 \alpha_1 &  \alpha^\top  \\
 \alpha & D\end{array} \right),
 \]
 with $\alpha_1 \in \mathbb{R}$, 
$\alpha:=(\alpha_2,\ldots,\alpha_n) \in \mathbb{R}^{n-1}$,
$d:=(d_2,\ldots,d_n)\in \mathbb{R}_+^{n-1}$,
 and $D:=\Diag(d) \in \mathbb{R}^{(n-1)\times (n-1)}$;
see \cite{arrow}, for example.

 \begin{lemma}
$ A(\alpha_1,\alpha,D)  \succeq 0$ if and only if $\alpha_1 \geq \sum_{i=2}^n{\frac{\alpha_i^2}{d_i}}$.
 \end{lemma}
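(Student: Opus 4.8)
The plan is to work directly from the definition of positive semidefiniteness, evaluating the quadratic form of $A:=A(\alpha_1,\alpha,D)$ and completing the square in the ``body'' coordinate. Write a generic vector as $x=(x_1,y^\top)^\top$ with $x_1\in\mathbb{R}$ and $y=(y_2,\ldots,y_n)^\top\in\mathbb{R}^{n-1}$. Using the arrowhead structure,
\[
x^\top A x = \alpha_1 x_1^2 + 2x_1\,\alpha^\top y + y^\top D y
= \alpha_1 x_1^2 + 2x_1\sum_{i=2}^n \alpha_i y_i + \sum_{i=2}^n d_i y_i^2.
\]
So $A\succeq 0$ means precisely that this expression is nonnegative for every choice of $x_1$ and $y$.

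First I would treat the nondegenerate case $D\succ 0$, i.e., $d_i>0$ for all $i$. For fixed $x_1$, the right-hand side is a separable convex quadratic in the $y_i$, minimized at $y_i=-x_1\alpha_i/d_i$, with minimum value $x_1^2\big(\alpha_1-\sum_{i=2}^n \alpha_i^2/d_i\big)$. (Equivalently, this is the block congruence / Schur-complement reduction of $A$ to $\Diag\big(\alpha_1-\alpha^\top D^{-1}\alpha,\,D\big)$; see, e.g., the Schur complement, \cite[Section 0.8.5]{HJBook}.) Hence $x^\top A x\geq 0$ for all $x$ if and only if the scalar coefficient $\alpha_1-\sum_{i=2}^n \alpha_i^2/d_i$ is nonnegative, which is exactly the claimed inequality. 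Both implications fall out at once: if the inequality holds the minimum is $\geq 0$, and if it fails, choosing $x_1=1$ and $y_i=-\alpha_i/d_i$ produces a negative value.

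The step I expect to require the most care is the degenerate case in which some $d_i=0$, since then $\alpha_i^2/d_i$ must be read with the convention that it is $0$ when $\alpha_i=0$ and $+\infty$ otherwise. If $d_i=0$ and $\alpha_i\neq 0$, then fixing $x_1=1$ and sending $y_i\to\mp\infty$ (matching the sign of $\alpha_i$) drives $x^\top A x\to-\infty$, so $A\not\succeq 0$; equivalently, the $2\times 2$ principal submatrix on $\{1,i\}$ has determinant $-\alpha_i^2<0$. This matches the right-hand side, which is then $+\infty$ and so never satisfied. If instead $d_i=0$ and $\alpha_i=0$, the coordinate $y_i$ does not appear in the quadratic form, so it can be deleted, reducing to the nondegenerate argument on the surviving coordinates. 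Assembling these cases gives the equivalence in full generality.
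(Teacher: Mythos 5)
Your proof is correct, but it takes a genuinely different route from the paper's. You work directly with the quadratic form and complete the square in the leg coordinates (equivalently, you take the Schur complement of $D$ in $A$), reducing positive semidefiniteness to the single scalar condition $\alpha_1-\alpha^\top D^{-1}\alpha\geq 0$. The paper instead applies a symmetric row/column scaling that normalizes the diagonal block to $\alpha_1 I$, and then invokes the known second-order-cone (``ice-cream cone'') semidefinite representation from \cite{AlizGold}: $A(\alpha_1,\tilde\alpha,\alpha_1 I)\succeq 0$ if and only if $\alpha_1\geq\|\tilde\alpha\|$, with $\tilde\alpha_i=\alpha_i\sqrt{\alpha_1/d_i}$; squaring and simplifying yields the stated inequality. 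Your argument is more elementary and self-contained (no external citation needed), and it explicitly treats the degenerate cases $d_i=0$, which the paper's scaling $\sqrt{\alpha_1/d_i}$ silently excludes --- indeed the paper's congruence also tacitly needs $\alpha_1>0$ for the scaling matrix to be invertible, whereas your completion-of-squares handles $\alpha_1=0$ without special pleading. What the paper's version buys is brevity and a direct link to a standard piece of semidefinite-programming modeling; what yours buys is full generality and a verification that an actual negative-energy direction exists when the inequality fails.
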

 
 \begin{proof}
 By symmetric row/column scaling, it is easy to see that 
  $A(\alpha_1,\alpha,D)\succeq 0$
 if and only if   $A(\alpha_1,\tilde \alpha,\alpha_1 I)\succeq 0$,
 where $\tilde \alpha_i := \alpha_i \sqrt{\frac{\alpha_1}{d_i}}$,
for $i=2,\ldots, n$.
 From \cite{AlizGold}, we have that 
 $ A(\alpha_1,\tilde \alpha,\alpha_1 I)  \succeq 0$ if and only if
  $\alpha_1 \geq \|\tilde \alpha\|$ --- this is how the ``ice-cream cone'' constraint is 
 typically modeled as a semidefinite-programming  constraint.
  Now plugging in the definition of $\tilde \alpha$ and simplifying,
  we obtain our result.
 \end{proof}
 

We consider now \ref{MESP}$(A(\alpha_1,\alpha,D),s)$, where we assume that 
$\alpha_1 \geq \sum_{i=2}^n{\frac{\alpha_i^2}{d_i}}$, so that $ A(\alpha_1,\alpha,D)  \succeq 0$. 
We will employ a greedy algorithm to solve\break \ref{MESP}$(A(\alpha_1,\alpha,D),s)$, but we do not simply apply such an algorithm directly. 
Rather, we will branch on element 1, and then apply a greedy algorithm to the two subproblems,
selecting the best solution so found.

Our greedy algorithm is a manifestation of a generic greedy maximization algorithm
for the more general problem: Given $f:2^N \mapsto \mathbb{R}$, 
find an $s$ element set $S\subset N$ with maximum value of $f(S)$. 

\addtocounter{algocf}{-1}

\medskip 
\begin{algorithm}[H]
\SetAlgoLined
\KwIn{$f:2^N \mapsto \mathbb{R}$; $1<s<n$;}
\KwOut{$S$;}
$S:=\emptyset$\;
\While{$|S|<s$}{
let $j^*:=\argmax\big\{f(S+j ~:~ j \in N\setminus S \big\}$\;
let $S:=S+j$\;
}
\caption{Generic greedy for $\max\left\{f(S) ~:~ |S|=s\right\}$}\label{alg:generic}
\end{algorithm}
\medskip

We let $S_0=\emptyset,S_1,\ldots,S_s$, be the sequence of iterates $S$
that are created, iteration by iteration, in Algorithm \ref{alg:generic}. 
We have the following nice property that we can exploit. 

\begin{lemma}\label{lem:greedy}
Suppose that $f:2^N \mapsto \mathbb{R}$ satisfies the following stability property:
 $f(T\cup\{i\}) \leq f(T\cup\{j\})$ for all $T\subset N\setminus\{i,j\}$,
whenever distinct $i,j$ in $N$ satisfy  $f(i)\leq f(j)$.
Then the iterates of Algorithm \ref{alg:generic} are maximizing sets for each cardinality. 
\end{lemma}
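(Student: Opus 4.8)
The plan is to show that the stability property forces the singleton values $f(\{1\}),\ldots,f(\{n\})$ to be the only data that matters: greedy selects elements in non-increasing order of singleton value, and such a prefix selection is optimal for every cardinality. So first I would fix an ordering $e_1,e_2,\ldots,e_n$ of $N$ with $f(\{e_1\})\geq f(\{e_2\})\geq\cdots\geq f(\{e_n\})$, and record the one consequence of stability that I will use repeatedly: for any indices $a<b$ and any $T\subseteq N\setminus\{e_a,e_b\}$, we have $f(T\cup\{e_b\})\leq f(T\cup\{e_a\})$, because $f(\{e_b\})\leq f(\{e_a\})$. Write $P_k:=\{e_1,\ldots,e_k\}$ for the prefix of the first $k$ elements.

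The heart of the argument is an exchange claim: $P_k$ is a maximizing $k$-set for every $k$. Given any $k$-set $S\neq P_k$, since $|S|=|P_k|=k$ the symmetric difference is nonempty on both sides, so I can pick $e_a\in S\setminus P_k$ (hence $a>k$) and $e_b\in P_k\setminus S$ (hence $b\leq k$ and $e_b\notin S$); thus $b<a$. Applying the recorded consequence with $T:=S\setminus\{e_a\}$ (which contains neither $e_a$ nor $e_b$) gives $f(S)=f(T\cup\{e_a\})\leq f(T\cup\{e_b\})=f\big((S\setminus\{e_a\})\cup\{e_b\}\big)$, so the swap does not decrease $f$ while strictly decreasing the potential $\sum_{e_i\in S} i$ by $a-b>0$. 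Iterating, the potential strictly decreases at each step and is minimized over $k$-sets only at $P_k$, so the process must terminate at $P_k$ without ever decreasing $f$; hence $f(S)\leq f(P_k)$ for every $k$-set $S$, i.e.\ $P_k$ is maximizing.

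It then remains to show that the greedy iterates coincide with the prefixes, $S_k=P_k$ for all $k$, which I would prove by induction, with $S_0=\emptyset=P_0$. Assuming $S_{k-1}=P_{k-1}$, the recorded consequence with $T:=P_{k-1}$ yields $f(P_{k-1}\cup\{e_j\})\leq f(P_{k-1}\cup\{e_k\})$ for every $j\geq k$, so $e_k$ attains the greedy maximum and $S_k=P_k$. Combined with the exchange claim, each iterate $S_k=P_k$ is a maximizing $k$-set, as required.

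The main obstacle is the handling of ties in singleton values. When several candidates attain the greedy maximum $\max_j f(S_{k-1}\cup\{j\})$, the $\argmax$ in Algorithm \ref{alg:generic} could select one whose singleton value is strictly smaller, which would break the invariant $S_k=P_k$; and one cannot repair this by weakening the invariant to ``$S_k$ is merely maximizing,'' since a maximizing $(k-1)$-set need not extend by a single element to a maximizing $k$-set, so the naive ``greedy stays optimal step by step'' induction is invalid. I would resolve this by breaking ties in favor of the candidate with the largest singleton value (equivalently, by assuming the singleton values are distinct), which keeps $e_k$ as the selected element and preserves the prefix invariant throughout.
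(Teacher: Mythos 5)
Your proof is correct and follows essentially the same route as the paper's: sort the elements by singleton value, observe that the prefix sequence is a valid run of Algorithm \ref{alg:generic}, and use a one-element exchange (justified by the stability property) to show each prefix is maximizing; your potential-function version of the exchange is just a constructive restatement of the paper's ``maximizer with maximum overlap with $S_k$'' extremal argument. Your closing remark about ties is also well taken, and is not a defect of your write-up relative to the paper's: the paper's proof likewise only establishes the conclusion for the particular run that selects prefixes (``is a valid sequence of sets to be produced by the greedy algorithm''), and under an adversarial $\argmax$ the literal statement can fail --- e.g., take distinct singleton values, $f$ constant on all $2$-sets, and $f(S)=\sum_{i\in S}f(\{i\})$ on larger sets; stability holds, yet an unlucky tie-break at the second step produces a non-maximizing $3$-set --- so your tie-breaking rule (or distinctness of the singleton values) is exactly the implicit hypothesis under which both proofs go through.
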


\begin{proof}
Without loss of generality, we assume that $f(1)\geq f(2)\geq \cdots \geq f(n)$.
Under our hypothesis, the sequence of sets $S_k:=\{1,2,\ldots,k\}$, $k=0,1,2,\ldots,s$,
is a valid sequence of sets to be produced by the greedy algorithm. 
By way of a proof by contradiction, suppose that some $S_k$, with $k\geq 2$, is
not a maximizing set of its cardinality.  
Among all maximizing solutions of cardinality $k$, let $S^*$ be
a maximizer that has the maximum number of elements in common
with $S_k$. We have $f(S^*)>f(S_k)$, and so
we can choose $j\in S^*\setminus S_k$ and $i\in S_k \setminus S^*$. Clearly,
by how $S_k$ is chosen and by the hypothesis of the lemma, we have $f(j)\leq f(i)$. 
But now the hypothesis of the lemma give us that $f((S^*\setminus\{j\})\cup\{j\})\geq 
f((S^*\setminus\{j\})\cup\{i\})$. Therefore $S^*\setminus\{j\}\cup\{i\}$ is also optimal,
but it has more elements in common with $S_k$ than $S^*$ does --- a contradiction. 
\end{proof}

In fact, we are interested in \ref{MESP}$(C,s)$, whereupon 
Algorithm \ref{alg:generic} particularizes as follows:

\medskip 
\begin{algorithm}[H]
\SetAlgoLined
\KwIn{$C\in\mathbb{S}^n_+$; $1<s<n$;}
\KwOut{$S$;}
$S:=\emptyset$\;
\While{$|S|<s$}{
let $j^*:=\argmax\big\{\ldet C[S+j,S+j] ~:~ j \in N\setminus S \big\}$\;
let $S:=S+j$\;
}
\caption{Greedy for \ref{MESP}$(C,s)$}\label{alg:greedy}
\end{algorithm}
\medskip

\begin{remark}
In fact, using the Schur complement of $C[S,S]$ in $C[S+j,S+j]$, we have
\[
\ldet C[S+j,S+j] = \ldet C[S,S] + \log \left( 
C[j,j]-C[j,S]\left(C[S,S]  \right)^{-1} C[S,j]
\right).
\]
So, to calculate $j^*$ in Algorithm \ref{alg:greedy}, we simply find the largest 
diagonal element from the Schur complement of $C[S,S]$  in $C$: 
\[
C[N\setminus S,N\setminus S]-C[N\setminus S,S]\left(C[S,S]  \right)^{-1} C[S,N\setminus S].
\]
\end{remark}
\bigskip

\vbox{
\noindent {\bf Branching on element 1.}
\begin{itemize}
\item 
If $1$ is not in some optimal solution of \ref{MESP}$(A(\alpha_1,\alpha,D),s)$, 
then 
\[
z(A(\alpha_1,\alpha,D),s)=z(D,s)= \sum_{i=1}^s {\log d_{[i]}}\thinspace. 
\]
That is, if  $1$ is not in some optimal solution,
then such an optimal solution
contains $s$ values of $i$
(from $\{2,\ldots,n\}$) with largest $d_i$.
Or, to put it another way, an optimal solution is found by applying 
 Algorithm \ref{alg:greedy}  to
\ref{MESP}$(D,s)$.
\item Alternatively,
if $1$ is in some optimal solution of \ref{MESP}$(A(\alpha_1,\alpha,D),s)$, 
 then
\[
z(A(\alpha_1,\alpha,D),s)=\log \alpha_1 +\textstyle z(D-\frac{1}{\alpha_1}\alpha\alpha^\top,s-1).
\]
So, then
\[
z(A(\alpha_1,\alpha,D),s) = \max\left\{\sum_{i=1}^s {\log d_{[i]}},~ \log \alpha_1 +\textstyle z(D-\frac{1}{\alpha_1}\alpha\alpha^\top,s-1) \right\},
\]
and it remains only to calculate 
$\textstyle z(D-\frac{1}{\alpha_1}\alpha\alpha^\top,s-1)$.
%
%
So in what follows, we employ Algorithm \ref{alg:greedy},
although we will apply it to 
   \ref{MESP}$\textstyle(D-\frac{1}{\alpha_1}\alpha\alpha^\top,s-1)$.
\end{itemize}
}

Now, we are prepared to describe our sufficient condition
for Algorithm \ref{alg:greedy} to 
correctly solve \ref{MESP}$\textstyle(D-\frac{1}{\alpha_1}\alpha\alpha^\top,s-1)$).
What we will show is that as we take successive Schur complements in
$\textstyle D-\frac{1}{\alpha_1}\alpha\alpha^\top$, the ordering of the remaining 
diagonal elements is not affected.

\begin{definition}
 Let $r_k := \frac{\alpha_k^2}{d_k}$ for $2\leq k 
 \leq n$, and we choose a (sorting) bijection $\pi:\{1,\ldots,n-1\}\rightarrow \{2,\ldots,n\}$, 
 such that $r_{\pi(1)} \geq  r_{\pi(2)} \geq \cdots r_{\pi(n-1)}$\thinspace.
 Let $\Phi(s-1) := \{ \pi(k)  ~:~ 1\leq k \leq s-1 \}$.  
\end{definition}

\noindent So, $\pi$ sorts the ratios $r_i$\thinspace, and then $\Phi$ selects the original indices of the $s-1$ 
largest. 

 \begin{theorem}
 \label{thm:star}
Let
 \[
 \hat{\alpha}_1 :=
 \sum_{k\in \Phi(s-1) } r_k
 + \max_{i,j \in \{2,\ldots,n\} \setminus \Phi(s-1) }\left\{\frac{\alpha_i^2 - \alpha_j^2}{d_i - d_j} ~:~ d_i > d_j,~ \alpha_i^2 > \alpha_j^2\right\}.
 \]
If $  A(\alpha_1,\alpha,D) \succeq 0$, 
   $\alpha_1 \geq \hat{\alpha}_1$\thinspace,  then  Algorithm \ref{alg:greedy} produces an optimal
   solution of
   \ref{MESP}$\textstyle(D-\frac{1}{\alpha_1}\alpha\alpha^\top,s-1)$.
 \end{theorem}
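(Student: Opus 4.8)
The plan is to apply Lemma~\ref{lem:greedy} to the set function $f(S):=\ldet\left(D-\frac{1}{\alpha_1}\alpha\alpha^{\top}\right)[S,S]$: once $f$ is shown to satisfy the stated stability property, the greedy iterates are maximizing for every cardinality, so Algorithm~\ref{alg:greedy} returns an optimal solution of cardinality $s-1$. Thus everything reduces to verifying stability, and the engine for doing so is a self-similarity of the matrix $M:=D-\frac{1}{\alpha_1}\alpha\alpha^{\top}$ under Schur complementation.

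First I would record the key algebraic fact. Writing $r_k=\alpha_k^2/d_k$, a direct computation of the Schur complement of the single diagonal entry $k$ in $M$ shows that it is again of exactly the same form, with $D$ and $\alpha$ restricted to the remaining indices and with the single scalar $\alpha_1$ replaced by $\alpha_1-r_k$; nothing else changes. Iterating, the Schur complement of $M$ with respect to any set $S$ equals $D^{(S)}-\frac{1}{\beta_S}\alpha^{(S)}(\alpha^{(S)})^{\top}$, where $\beta_S:=\alpha_1-\sum_{k\in S}r_k$ and the superscript denotes restriction to $N\setminus S$. By the Schur-complement formula in the Remark following Algorithm~\ref{alg:greedy}, the gain of adjoining an index $i$ to the current set $S$ is therefore $\log g_i(\beta_S)$, where $g_i(\beta):=d_i-\alpha_i^2/\beta=d_i\left(1-r_i/\beta\right)$. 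Hence the greedy always adjoins an index maximizing $g_i(\beta_S)$, and as elements are selected $\beta_S$ only decreases from its initial value $\alpha_1$.

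With this in hand, stability of $f$ is exactly the assertion that the ordering of the numbers $g_i(\beta)$ is not disturbed as $\beta$ decreases: $g_i(\alpha_1)\le g_j(\alpha_1)$ should force $g_i(\beta_S)\le g_j(\beta_S)$ for every base set $S$ (playing the role of $T$ in Lemma~\ref{lem:greedy}) that can arise. Since $g_i(\beta)-g_j(\beta)=(d_i-d_j)-(\alpha_i^2-\alpha_j^2)/\beta$ is monotone in $\beta$ with the sign of its derivative governed by $\alpha_i^2-\alpha_j^2$, the implication is automatic unless $d$ and $\alpha^2$ are co-ordered on the pair; the only dangerous pairs are those with (after relabeling) $d_i>d_j$ and $\alpha_i^2>\alpha_j^2$, and for such a pair the two curves cross at $\beta^{*}_{ij}:=(\alpha_i^2-\alpha_j^2)/(d_i-d_j)$, the order reversing precisely for $\beta<\beta^{*}_{ij}$. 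So I would show that $\alpha_1\ge\hat{\alpha}_1$ forces every dangerous crossing to lie at or below the smallest value of $\beta$ the greedy ever sees. Because every such base set $S$ satisfies $|S|\le s-1$, we have $\sum_{k\in S}r_k\le\sum_{k\in\Phi(s-1)}r_k$, hence $\beta_S\ge\alpha_1-\sum_{k\in\Phi(s-1)}r_k$; combined with the definition of $\hat{\alpha}_1$ this yields $\beta_S\ge\beta^{*}_{ij}$ for every dangerous pair with $i,j\notin\Phi(s-1)$, so no reversal occurs among such pairs.

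The main obstacle is exactly the bookkeeping just flagged: $\hat{\alpha}_1$ maximizes only over pairs avoiding $\Phi(s-1)$, so I must still argue that a crossing involving an index of $\Phi(s-1)$ cannot spoil the greedy. The natural way to dispatch this is to exploit that such an index has a comparatively large ratio $r_k$: either its crossing already lies below $\min_S\beta_S$, or its incremental value $g_k(\beta)=d_k(1-r_k/\beta)$ has turned nonpositive by the time $\beta$ reaches the disputed interval, so that the greedy never swaps across it and the selected set is unaffected. Two routine side conditions remain to be checked: that $M\succeq0$, equivalently $\alpha_1\ge\sum_k r_k$ by the preceding lemma, so that the relevant principal submatrices are positive definite and the logarithms are well defined; and that $\beta_S>0$ throughout, which follows from $\sum_{k\in S}r_k\le\sum_{k\in\Phi(s-1)}r_k\le\hat{\alpha}_1\le\alpha_1$. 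Granting stability, Lemma~\ref{lem:greedy} then yields optimality of the greedy output at once.
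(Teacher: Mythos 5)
Your proposal is correct and follows essentially the same route as the paper's proof: reduce to Lemma~\ref{lem:greedy}, observe that Schur complementation preserves the ``diagonal minus rank-one'' structure so that the incremental gain of index $i$ over base set $T$ is $\log\bigl(d_i-\alpha_i^2/(\alpha_1-\sum_{k\in T}r_k)\bigr)$, note that only pairs with $d_i>d_j$ and $\alpha_i^2>\alpha_j^2$ can reverse order, and use $\sum_{k\in T}r_k\le\sum_{k\in\Phi(s-1)}r_k$ together with $\alpha_1\ge\hat{\alpha}_1$ to rule out any reversal. The one place you remain sketchy --- pairs in which one index lies in $\Phi(s-1)$, which the maximum in $\hat{\alpha}_1$ excludes --- is a point the paper's own proof also passes over without comment, so your explicit flagging of it is if anything a modest improvement.
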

 
  \begin{proof}
 Let $\bar{D}:=D-\frac{1}{\alpha_1}\alpha\alpha^\top$. 
\emph{If the Schur complement of $\bar{D}[T,T]$ in $\bar{D}$
has the same ordering of its diagonal elements as $\bar{D}[{N}\setminus T,{N}\setminus T]$,
for all $T\subset N\setminus\{1\}$}, then Algorithm \ref{alg:greedy} applied to 
  \ref{MESP}$\textstyle(\bar{D},s-1)$  
  will choose $s-1$ values of $i$ corresponding to the
$s-1$ greatest diagonal elements of $\bar{D}$, and that will be optimal for 
\ref{MESP}$\textstyle(\bar{D},s-1)$.
By Lemma \ref{lem:greedy} Algorithm \ref{alg:generic}, and hence Algorithm \ref{alg:greedy}, generates the optimal solution.

So, it remains to demonstrate that if $\alpha_1 \geq \hat{\alpha}_1$\thinspace,
then the Schur complement of $\bar{D}[T,T]$ in $\bar{D}$
has the same ordering of its diagonal elements as $\bar{D}[{N}\setminus T,{N}\setminus T]$,
for all $T\subset N\setminus\{1\}$.  In what follows, it is notationally easier to
work with $D$ rather than $\bar{D}$, so we consider  $1\in T\subset N$ rather than
$T\subset N\setminus\{1\}$. 

For $1\in T\subset N$, with $|T|=t$,
the Schur complement of $A[T,T]$ in $A:=A(\alpha_1,\alpha,D)$ is
\begin{align*}
&A[N\setminus T,N\setminus T]- A[N\setminus T,T]\left( A[T,T]\right)^{-1} A[T,N\setminus T]\\
&\quad = D[N\setminus T,N\setminus T] - \left(\alpha[N\setminus T],0_{(n-t)\times t} \right) \left( A[T,T]\right)^{-1}  \left( \alpha[N\setminus T],0_{(n-t)\times t} \right)^\top\\
&\quad = D[N\setminus T,N\setminus T] - \left( A[T,T]\right)^{-1}_{11} \alpha[N\setminus T] \alpha[N\setminus T]^\top.
\end{align*}
Now, for $i\in N\setminus T$, the diagonal entry indexed by $i$ of this Schur complement is
\begin{align*}
d_{ii} - \left( \frac{1}{\alpha_1 - \sum_{k\in T\setminus\{1\}}{\frac{\alpha_k^2}{d_{k}}}}\right) \alpha_{i}^2~,
\end{align*}
where we have extracted $\left( A[T,T]\right)^{-1}_{11}$
using the standard block-matrix inverse formula on 
\[
A[T,T] = \left(
  \begin{array}{c|c}
\alpha_1 & \alpha[T\setminus\{1\}]^\top \\[2pt]
\hline 
\alpha[T\setminus\{1\}] & D[T\setminus\{1\},T\setminus\{1\}] \vphantom{x^{\strut}} \\
  \end{array}
\right).
\]

If $T=\{1\}$, then diagonal element $i$ of the Schur complement is $d_i-\frac{1}{\alpha_1}\alpha_i^2$\thinspace
So we want to demonstrate that, for all $T\ni 1$ and $i,j\in T\setminus N$, if
\begin{equation}\tag{$*$}\label{star}
d_i-\frac{1}{\alpha_1}\alpha_i^2
~ \geq ~
d_j-\frac{1}{\alpha_1}\alpha_j^2~,
\end{equation}
then 
\begin{equation}\tag{$**$}\label{starstar}
    d_i - \dfrac{\alpha_i^2}{\alpha_1 - \sum_{k \in T\setminus\{1\}}{\frac{\alpha_k^2}{d_k}}} ~ \geq ~ d_j - \dfrac{\alpha_j^2}{\alpha_1 - \sum_{k \in T\setminus\{1\}}{\frac{\alpha_k^2}{d_k}}}~.
\end{equation}

Note that \eqref{star} implies that either $d_i \geq d_j$ and $\alpha_i^2 \leq \alpha_j^2$\thinspace,
in which case \eqref{starstar} is trivially true, or $d_i > d_j$ and $\alpha_i^2 > \alpha_j^2$\thinspace.
In this latter case, \eqref{starstar} reduces to 
\begin{equation*}
\alpha_1   \geq  \sum_{k \in T\setminus\{1\}}{\frac{\alpha_k^2}{d_k}} + \dfrac{\alpha_i^2 - \alpha_j^2}{d_i - d_j}~.
\end{equation*}
The result now follows.  \end{proof}


 
 
 

\begin{example}
If the sufficient condition of Theorem \ref{thm:star} does not hold, then indeed the greedy algorithm may not find an optimum.
For example,
Let
\[ 
A = \left( \begin{array}{ccccc}
    12 & 3.5 & 1.9 & 0.04  & 4.9\\
    3.5  & 4 & 0 & 0 & 0\\
    1.9  &  0 &  3 & 0 & 0 \\
    0.04  &  0 & 0  & 2.5 & 0\\
    4.9  & 0 &  0 & 0 & 5
\end{array} \right)
\]
and take $s=3$.
It is easy to check that
\begin{align*}
    \alpha_1 = 12 &< \hat{\alpha}= 15.0831 = \frac{4.9^2}{5} + \frac{3.5^2}{4} + \frac{1.9^2 - 0.04^2}{3-2.5}. \\
\end{align*}
The diagonal elements of the Schur complements of $A[T,T]$ 
for $T:=\{1\}$ and $T:= \{1,5\}$ respectively are


\[
\bordermatrix{
&&\cr
2 &  4 - \frac{3.5^2}{12}\hspace{-10pt} \cr
3 &  3 - \frac{1.9^2}{12}\hspace{-10pt} \cr
4 &   2.5 - \frac{0.04^2}{12}\hspace{-10pt}\cr
5 &  5 - \frac{4.9^2}{12}\hspace{-10pt}
}
\!=\! \left(\begin{array}{c}
  2.9792  \\
  2.6992   \\
  2.4999 \\
  2.9992
\end{array}\right) 
\mbox{ and }\quad
%
%
\bordermatrix{
&&\cr
2&  4 - \frac{3.5^2}{12 - 4.9^2/5}\hspace{-10pt} \cr
3&  3 - \frac{1.9^2}{12 -  4.9^2/5}\hspace{-10pt}   \cr
4&  2.5 - \frac{0.04^2}{12 -  4.9^2/5}\hspace{-10pt}  
}
\!=\! \left(\begin{array}{c}
  2.2981  \\
  2.4985   \\
  2.4998 
\end{array}\right) .
\]
The ordering implied by the first Schur complement is $5,2,3,4$ 
and for the second it is $4,3,2$. The greedy solution
appends 5 and then 4 to $\{1\}$,
but the optimal solution turns out to be $S^* = \{1,2,3\}$. 
\end{example}

\section{Tridiagonal masks}\label{sec:tridiagM}


It may well be that neither $G(C)$ nor $G(C^{-1})$ is 
a spider with a constant number of legs.
Even then, we can use our dynamic-programming algorithm to
get a bound on $z(C,s)$, because $\ldet C[S,S] \leq \ldet (C\circ M)[S,S]$,
for all $S\subset N$.
It is evident that
when $M$ is sparse and $C$ is fully dense,
$G(C\circ M)=G(M)$, and so when $G(M)$ is a spider with
a constant number of legs, we can apply
our dynamic-programming algorithm  to 
$C\circ M$ to efficiently  get an upper bound on $z(C,s)$. 
 Furthermore, upper bounds for
$z(C^{-1},n-s)$  yield upper bounds for $z(C,s)$, shifting by $\ldet C$,
and upper bounds (under this complementation)
are \emph{not} always equivalent. So we can as well 
profitably apply masking to $C^{-1}$.

For simplicity of exposition
and because we have developed the theory in more detail for tridiagonal
masks $M$ (i.e., when $G(M)$ is a collection of disjoint paths), we confine
our attention to tridiagonal masks $M$.
To set some notation, a \emph{tridiagonal mask}  $M\in \mathbb{R}^{n\times n}$ has the form
\[
M:=\left(
  \begin{array}{ccccc}
    1 & \mu_1 &  &  &  \\
    \mu_1 & 1 & \mu_2 &  &  \\
     & \mu_2 & \ddots & \ddots &  \\
     &  & \ddots & \ddots & \mu_{n-1} \\
     &  &  & \mu_{n-1} & 1 \\
  \end{array}
\right),
\]
with $M_{ij}:=0$ when $|i-j|>1$.

For $M$  to be a
 mask, we need it to be positive semidefinite. 
 For example, we can check that
if we have all $\mu_i:=1$, then $M$ is not positive semidefinite (for all $n>2$); but if we have all $|\mu_i|\leq \frac{1}{2}$,
 then $M$ is diagonally dominant and hence positive semidefinite for all $n$. A \emph{$\frac{1}{2}$-mask}
is such an $M$ with all $|\mu_i|= \frac{1}{2}$.
 In fact, we can do better than this, in the sense that we can increase \emph{some} entries from 
 $\frac{1}{2}$, which seems empirically 
 to be valuable for getting better 
 bounds. For example: for $n=2$, we can set $\mu_1=1$; and for $n=3$, we could set $\mu_1=\frac{1}{2}$
 and $\mu_2=\sqrt{\frac{3}{4}}$.
 For practical purposes, we will limit the number of pairs of symmetric entries that are increased from $\frac{1}{2}$ to two, and we want to
 see how much we can increase such entries up from  $\frac{1}{2}$.
 Clearly an upper bound on the maximum value of each $\mu_i$ is 1, because 
$
\begin{psmallmatrix*}[c]1 & \mu_i\\ \mu_i & 1\end{psmallmatrix*}
$ 
is always a principal submatrix.
 
For $1\leq p<q < n$, and $a,b\in \mathbb{R}$, let 
$M:=M(n,p,a,q,b)$ be an order-$n$ matrix that differs from the 
order-$n$ $\frac{1}{2}$-mask in that $M_{p+1,p}=M_{p,p+1}=\mu_{p}:=a$ and
$M_{q+1,q}=M_{q,q+1}=\mu_{q}:=b$. We will also denote $M(n,p,a) := M(n,p,a,p,a)$ and $M_{1/2}(n) := M(n,p,1/2)$  $\forall 1 \leq p \leq n-1$.


The following lemma has a somewhat lengthy technical proof, which can be
found in Appendix 1. 

\begin{lemma}
\label{lem:det}
For $1\leq p<q < n$, and $a,b\in \mathbb{R}$, 
\begin{align*}
\det M(n,p,a,q,b) = 
&\dfrac{1}{2^{n}} (n-q+1)\Big( {(p+1)(q-p+1) - 4a^2p(q-p)}\Big) \\
&\quad -\dfrac{1}{2^{n}} (n-q)4b^2\left((p+1)  (q-p)- 4a^2p(q-p-1)\right) .
\end{align*}
\end{lemma}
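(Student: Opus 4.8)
The plan is to compute $\det M(n,p,a,q,b)$ by exploiting the tridiagonal determinant recursion (Lemma~\ref{lem:tridet}) together with the fact that $M$ agrees with the $\frac{1}{2}$-mask except in the two off-diagonal positions indexed by $p$ and $q$. The recursion $\det T_r = a_r \det T_{r-1} - b_{r-1}^2 \det T_{r-2}$ with all diagonal entries $a_i = 1$ and off-diagonal entries equal to $\frac{1}{2}$ (so $b_i^2 = \tfrac14$) has the solution $\det M_{1/2}(r) = \frac{r+1}{2^r}$, which I would first establish as a base computation: the recurrence $D_r = D_{r-1} - \tfrac14 D_{r-2}$ with $D_0 = 1$, $D_1 = 1$ has characteristic root $\tfrac12$ of multiplicity two, giving $D_r = (r+1)/2^r$. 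This single closed form is the workhorse for the whole calculation.

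Next I would expand the full determinant by \emph{cofactor expansion along the two special rows/columns} $p,p+1$ and $q,q+1$, or equivalently run the recursion through the matrix and track how the two modified entries $a^2$ and $b^2$ enter. The cleanest route is a nested application of the tridiagonal recursion: reading the matrix from the bottom, the entries below position $q$ behave like a pure $\frac{1}{2}$-mask of order $n-q$, contributing the factor $(n-q+1)/2^{n-q}$ and $(n-q)/2^{n-q-1}$ in the two terms of the recursion at step $q$. Similarly the block between positions $p+1$ and $q$ is a $\frac{1}{2}$-mask of order $q-p$ (interacting with the modified entry $b^2$ at the $q$-junction and $a^2$ at the $p$-junction), and the block of order $p$ at the top is again a pure $\frac{1}{2}$-mask. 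The strategy is thus to write $\det M$ as a sum of products of $\frac{1}{2}$-mask determinants of these three sub-blocks, with coefficients $1$, $a^2$, $b^2$, $a^2 b^2$ according to which junctions are ``cut'', and then substitute the closed form $D_r = (r+1)/2^r$ everywhere. Collecting the $2$-powers yields the global $1/2^{n}$ prefactor seen in the statement.

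Concretely, I expect the expansion at the $q$-junction to split $\det M$ into a term where the edge of weight $b$ is kept (giving $(n-q+1)$ times the determinant of the order-$(q-1)$ top part containing the $a$-modification) and a term where it is removed (giving $-4b^2(n-q)$ times the determinant of the order-$(q-2)$ top part). Each of these two ``top parts'' is then itself expanded at the $p$-junction into a kept-edge term and a removed-edge term, each of which is now a product of pure $\frac{1}{2}$-mask determinants and hence evaluable in closed form. Matching the resulting four contributions against the grouped expression in the statement—namely $(n-q+1)\big((p+1)(q-p+1) - 4a^2 p(q-p)\big)$ and $-4b^2(n-q)\big((p+1)(q-p) - 4a^2 p(q-p-1)\big)$—is then a matter of identifying $(p+1)$ with the top-block determinant $D_p \cdot 2^p$, and $(q-p+1)$, $(q-p)$, $(q-p-1)$ with the middle-block determinants $D_{q-p}\cdot 2^{q-p}$, etc.

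The main obstacle is bookkeeping rather than conceptual: one must be careful about exactly which sub-block orders arise at each junction (the off-by-one shifts between ``keeping'' versus ``cutting'' an edge of weight $a$ or $b$), and about the precise power of $2$ accumulated, so that everything collapses to the single prefactor $1/2^n$. Getting the four coefficients and their block-order arguments exactly right is the delicate part, and it is presumably why the authors relegate the detailed verification to Appendix~1; the plan above is the skeleton that the appendix fleshes out.
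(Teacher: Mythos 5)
Your proposal is correct in substance but follows a genuinely different route from the paper's. The paper proves Lemma \ref{lem:det} by induction on $n$: it applies the three-term recursion of Lemma \ref{lem:tridet} at the last index, which (because the recursion has depth two) forces a case analysis over six configurations of $p$ and $q$ relative to $n-2$ and $n-1$, together with base cases $n=3,4$. You instead (i) solve the constant-coefficient recurrence $D_r = D_{r-1}-\tfrac14 D_{r-2}$ in closed form, $D_r=(r+1)/2^r$ (consistent with the paper's $\det M(n,p,\tfrac12)=\tfrac{1}{2^n}(n+1)$), and (ii) split the determinant at the two modified edges using the identity
\[
\det T \;=\; \det T_{1:k}\,\det T_{k+1:n} \;-\; b_k^2\,\det T_{1:k-1}\,\det T_{k+2:n},
\]
where $T_{i:j}$ denotes the principal submatrix on indices $i,\ldots,j$, applied first at $k=q$ and then at $k=p$; this reduces everything to four products of pure $\tfrac12$-mask determinants and collapses directly to the stated formula with no case analysis (the convention $D_0=1$ handles the boundary situations $p=1$, $q=p+1$, $q=n-1$). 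I have checked that the arithmetic works out exactly as you predict. What your route buys is brevity and the elimination of the six cases; what it costs is that the splitting identity is stronger than Lemma \ref{lem:tridet} (which is its $k=n-1$ special case) and would itself need a short justification, e.g., by induction from Lemma \ref{lem:tridet} or by Laplace expansion. One caveat: in your description of the split at the $q$-junction, the kept-edge and cut-edge top parts have orders $q$ and $q-1$, not $q-1$ and $q-2$ as written; this is precisely the off-by-one bookkeeping you flag, and with the correct orders the four terms match the statement.
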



From this, we can see how large we can make $a$, when $b$ is held at $1/2$.
In particular, we will see that if $b$ is held at  $1/2$, we
can make $a$ at least $\sqrt{2}/2$ by positioning $a$ at the first off-diagonal pair.

\begin{proposition}\label{prop:astar}
The maximum value of $a$ such that $M(n,p,a) \succeq 0$ is
\vskip-5pt
\[ 
a^*(n,p) := \frac{1}{2}\sqrt{\left(1+\frac{1}{p}\right)\left(1+\frac{1}{n-p}\right)}.
\]
\vskip-5pt
\noindent Furthermore, $a^*(n,p)$ is convex and decreasing in $n$ and $\lim_{n\to\infty} a^*(n,p) =   \frac{1}{2} \sqrt{1+1/p}$. 
In particular, $\displaystyle \max_{p ~:~ 1\leq p <n-1} ~\lim_{n\to\infty} a^*(n,p) =
\lim_{n\to\infty} a^*(n,1) = 
\sqrt{2}/2$.
\end{proposition}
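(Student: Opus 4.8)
The plan is to reduce everything to a single determinant computation and then control the positive-semidefinite boundary by a continuity argument. First I would compute $\det M(n,p,a)$ as a function of $a$. Since $M(n,p,a)$ agrees with $M(n,p,a,q,\tfrac12)$ for any $q$ with $p<q<n$ (setting $b=\tfrac12$ restores the unmodified value at position $q$), I can substitute $b=\tfrac12$ into Lemma~\ref{lem:det} and simplify; the bookkeeping index $q$ must drop out, leaving $\det M(n,p,a)=\tfrac{1}{2^n}\big((p+1)(n-p+1)-4a^2p(n-p)\big)$. For the corner case $p=n-1$, where no admissible $q$ exists, the same expression follows directly from the recursion of Lemma~\ref{lem:tridet}, cutting the path at the edge $(p,p+1)$; as a sanity check, $a=\tfrac12$ returns $\det M_{1/2}(n)=(n+1)/2^n$. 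The key structural feature is that this determinant is affine and strictly decreasing in $a^2$.

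Next I would read off $a^*(n,p)$ as the unique nonnegative root: $\det M(n,p,a)=0$ exactly when $4a^2p(n-p)=(p+1)(n-p+1)$, i.e.\ $a=\tfrac12\sqrt{(1+1/p)(1+1/(n-p))}=a^*(n,p)$. For $a>a^*(n,p)$ the determinant is negative, so $M(n,p,a)$ cannot be positive semidefinite. To show that $M(n,p,a)\succeq0$ throughout $0\le a\le a^*(n,p)$ — the genuinely delicate point — I would argue by eigenvalue continuity. For $0\le a\le\tfrac12$ the matrix is weakly diagonally dominant with nonnegative diagonal, hence positive semidefinite. At $a=\tfrac12$ it is the $\tfrac12$-mask, which is positive definite (diagonally dominant and nonsingular, since $\det M_{1/2}(n)=(n+1)/2^n>0$). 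As $a$ increases the eigenvalues vary continuously; on $[\tfrac12,a^*(n,p))$ the determinant stays strictly positive, so no eigenvalue can reach $0$ and the matrix remains positive definite, while at $a=a^*(n,p)$ positive semidefiniteness follows by continuity. This pins the maximum positive-semidefinite value of $a$ at $a^*(n,p)$. I should note that $\det M(n,p,a)$ depends on $a$ only through $a^2$, and conjugation by a suitable $\pm1$ diagonal (hence orthogonal) matrix carries $M(n,p,a)$ to $M(n,p,-a)$, so restricting to $a\ge0$ loses nothing.

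Finally I would analyze $a^*(n,p)=\tfrac12\sqrt{1+1/p}\cdot h(n)$, where $h(n):=\sqrt{1+1/(n-p)}$ carries all the $n$-dependence. A direct first- and second-derivative computation in $n$ (equivalently in $u:=n-p>0$) gives $h'(n)<0$ and $h''(n)>0$, so $a^*(n,p)$ is decreasing and convex in $n$. Since $1/(n-p)\to0$, we get $\lim_{n\to\infty}a^*(n,p)=\tfrac12\sqrt{1+1/p}$, and this limit is decreasing in $p$, hence maximized over $p\ge1$ at $p=1$, where it equals $\tfrac12\sqrt2=\sqrt2/2$.

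The main obstacle is the positive-semidefiniteness direction: a nonnegative determinant alone does not certify $M(n,p,a)\succeq0$ (in principle a smaller principal minor could turn negative first), so the continuity-of-eigenvalues argument anchored at the positive-definite $\tfrac12$-mask is what does the real work. Everything else — the determinant identity and the elementary calculus for monotonicity, convexity, and the limit — is routine.
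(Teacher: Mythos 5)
Your proposal is correct, and it shares the paper's overall skeleton (compute $\det M(n,p,a)$ from Lemma~\ref{lem:det}, observe it is strictly decreasing in $a$ for $a>0$, and read off $a^*(n,p)$ as the unique nonnegative root, so that $a^*$ is at least an upper bound), but it certifies positive semidefiniteness on the whole interval $[0,a^*]$ by a genuinely different mechanism. The paper verifies that every proper leading principal submatrix of $M(n,p,a^*)$ has positive determinant --- for $k\le p$ these are order-$k$ $\frac12$-masks, and for $p<k<n$ an explicit simplification gives $\det M(k,p,a^*)=\frac{p+1}{2^k}(k-p)\bigl(\frac{1}{k-p}-\frac{1}{n-p}\bigr)>0$ --- and then invokes the standard fact that positive leading principal minors of orders $1,\dots,n-1$ together with $\det\ge 0$ imply $\succeq 0$. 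You instead anchor at the diagonally dominant, positive-definite $\frac12$-mask and argue by eigenvalue continuity: since $\det M(n,p,a)>0$ on $[\frac12,a^*)$, no eigenvalue can cross zero there, so the matrix stays positive definite and is positive semidefinite at $a^*$ by taking limits. Both arguments are sound; yours avoids the explicit minor computation, while the paper's version has the side benefit that the positivity of the submatrix determinants $\det M(k,p,a)$ is reused later in the proof of Theorem~\ref{thm:bmax}. Your remarks on the corner case, the $a\mapsto -a$ sign symmetry, and the elementary calculus for monotonicity, convexity, and the limits are all fine and match what the paper leaves as ``easy to verify.''
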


\begin{proof}
Note that $M(n,p,\frac{1}{2})=M_{1/2}(n) \succeq 0$, because it is diagonally dominant.  Also, we have  that $\det M(n,p,\frac{1}{2}) > 0 $ (by  Lemma \ref{lem:det}).  Using Lemma \ref{lem:det}, we can 
see that 
\[
\det M(n,p,a)= 
\dfrac{1}{2^{n}} \Big( (p+1)(n-p+1) - 4a^2p(n-p)\Big) 
\]
is decreasing in $a$. Now, solving $\det M(n,p,a)=0$ for $a$, and carrying out some algebraic manipulations,
we obtain the formula for $a^*(n,p)$ stated in the result. 
What we can conclude, at this point, is that $a^*(n,p)$ is an upper bound on the 
maximum value of $a$ such that $M(n,p,a) \succeq 0$. 

To see that this upper bound is
in fact the true maximum, we need to verify that $M(n,p,a^*)\succeq 0$.
To do this, we will check that all leading principal submatrices of order $1\leq k <n$
have positive determinant. This, together with the nonnegativity of the determinant
of $M(n,p,a^*)$ (it is in fact 0) is sufficient to establish that $M(n,p,a^*)\succeq 0$ (see, for example, \cite[Exercise at the bottom of p. 404]{HJBook}).

Now, for $k\leq p$, the $k$-th leading principal submatrix is an order-$k$ $\frac{1}{2}$-mask, which has positive
determinant by Lemma \ref{lem:det}. For $p < k <n$, the 
$k$-th leading principal submatrix is $M(k,p,a^*)$.
After some algebraic manipulations, we arrive at 
\[
	\det M(k,p,a^{*})
	=  \frac{p+1}{2^{k}}
	(k-p)
	\left(
	\frac{1}{k-p} -	\frac{1}{n-p} 
	\right),
\]
which is clearly positive for $p < k <n$.




Thus, we have established that $a^*(n,p)$ is  the
maximum value of $a$  such that $M(n,p,a) \succeq 0$. 
The rest of the result is easy to verify.
\end{proof}

Knowing the possible increases of $a$ from $1/2$, we can determine the maximum of $b$ (given $a$) and the general behavior of the 
maximum as we vary $p$ and $q$.

\begin{theorem}\label{thm:bmax}
For $1\leq p<q < n$ and $a\in [\frac{1}{2},a^*(n,p)]$, we have that 
$M(n,p,a,q,b) \succeq 0$ if and only if $b\in[\frac{1}{2},b^*(n,p,a,q)]$, where
\[
b^*(n,p,a,q): = \frac{1}{2}\sqrt{\dfrac{(n-q+1)\Big((p+1)(q-p+1) - 4a^2p(q-p)\Big)}{
(n-q)\Big((p+1)(q-p) - 4a^2p(q-p-1) \Big)}}. 
\]
\end{theorem}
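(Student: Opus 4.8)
The plan is to mirror the structure of the proof of Proposition \ref{prop:astar}: first pin down the threshold $b^*$ as the root of the determinant viewed as a function of $b$, and then certify positive semidefiniteness at that threshold through a leading-principal-minor argument. Throughout I work in the regime of interest $b \geq \tfrac12$ (the setting of \emph{increasing} the $\tfrac12$-mask entries). Note that Lemma \ref{lem:det} makes $\det M(n,p,a,q,b)$ depend on $b$ only through $b^2$, so the semidefinite region is automatically symmetric about $b=0$, and it suffices to locate its upper endpoint.

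First I would record from Lemma \ref{lem:det} that
\[
\det M(n,p,a,q,b) = \tfrac{1}{2^n}\big[(n-q+1)A - (n-q)\,4b^2 B\big],
\]
with $A := (p+1)(q-p+1) - 4a^2 p(q-p)$ and $B := (p+1)(q-p) - 4a^2 p(q-p-1)$. The coefficient $B$ equals $2^{\,q-1}\det M(q-1,p,a)$, which is strictly positive because $a \le a^*(n,p) < a^*(q-1,p)$ by the monotonicity of $a^*$ in its first argument (Proposition \ref{prop:astar}); similarly $A = 2^{q}\det M(q,p,a) > 0$. Hence the determinant is strictly decreasing in $b^2$, and solving $\det M(n,p,a,q,b)=0$ for $b$ yields exactly the stated $b^*(n,p,a,q)$. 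This already gives the ``only if'' direction: for $b > b^*$ the determinant is negative, so $M$ cannot be positive semidefinite. It also shows $b^* \ge \tfrac12$, since at $b=\tfrac12$ the matrix is $M(n,p,a) \succeq 0$ (Proposition \ref{prop:astar}), forcing a nonnegative determinant there.

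For the ``if'' direction I would establish $M(n,p,a,q,b^*) \succeq 0$ using the criterion of \cite[p.~404]{HJBook}: all leading principal minors of order $1 \le k < n$ are positive, while the order-$n$ determinant is $0$. I split on $k$: for $k \le p$ the leading block is an order-$k$ $\tfrac12$-mask with determinant $(k+1)/2^k>0$; for $p < k \le q$ it is $M(k,p,a)$, whose determinant is positive because $a < a^*(k,p)$ (again monotonicity, using $k<n$); and for $q < k < n$ it is $M(k,p,a,q,b^*)$, where substituting $4(b^*)^2 = (n-q+1)A/\big((n-q)B\big)$ into the order-$k$ specialization of Lemma \ref{lem:det} collapses the expression to $\tfrac{A}{2^k}\cdot\tfrac{n-k}{n-q}$, which is positive since $A>0$ and $k<n$. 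With the endpoints $M(n,p,a,q,\tfrac12)$ and $M(n,p,a,q,b^*)$ both semidefinite and $b \mapsto M(n,p,a,q,b)$ affine, convexity of the semidefinite cone gives $M \succeq 0$ for every $b \in [\tfrac12, b^*]$, completing the characterization.

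The main obstacle is the case $q < k < n$: one must verify that the intermediate leading minors stay positive at the critical value $b=b^*$, not merely that the full determinant vanishes. The saving grace is the clean cancellation producing $\tfrac{A}{2^k}\cdot\tfrac{n-k}{n-q}$, but arriving there requires the correct order-$k$ version of Lemma \ref{lem:det} together with the sign facts $A,B>0$, which themselves rest on the monotonicity of $a^*$ from Proposition \ref{prop:astar}.
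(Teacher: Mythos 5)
Your proof is correct and follows essentially the same route as the paper's: identify $b^*$ as the root of $\det M(n,p,a,q,b)$ in $b$, establish monotonicity in $b$ via the positivity of the coefficient $B$ (your $a^*(q-1,p)$ is exactly the paper's threshold $\tilde{a}(q,p)$), and certify $M(n,p,a,q,b^*)\succeq 0$ through the same three-way split on leading principal minors, including the same collapse of $\det M(k,p,a,q,b^*)$ to a positive multiple of $n-k$ for $q<k<n$. The only (harmless) divergence is that you pass from the endpoints to the whole interval $[\tfrac12,b^*]$ by convexity of the semidefinite cone, whereas the paper reuses the monotonicity of the leading minors in $b$.
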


\begin{proof}
It is easy to verify, from Lemma \ref{lem:det}, that the formula given for $b^*(n,p,a,q)$
is the unique nonnegative solution of $\det M(n,p,a,q,b)=0$. 

We claim that $\det M(n,p,a,q,b)$
is decreasing in $b$ when $a\in [\frac{1}{2},a^*(n,p)]$ (i.e., the
the range of $a$ that ensures $M(n,p,a) \succeq 0$).
It is evident, from Lemma \ref{lem:det}, that this is when
\[
(p+1)(q-p) - 4a^2p(q-p-1) > 0,
\]
or simply when
\[
a < \tilde{a}(q,p) := \frac{1}{2}\sqrt{\left( 1+\frac{1}{p}\right) 
  \left(1+\frac{1}{(q-1)-p} \right)}.
\]
Notice that with $q<n$, we clearly have $\tilde{a}(q,p)>a^*(n,p)$, and 
therefore   $\det M(n,p,a,q,b)$
is decreasing in $b$ when $a\in [\frac{1}{2},a^*(n,p)]$.

We can conclude that for $a\in [\frac{1}{2},a^*(n,p)]$, 
the maximum value of $b$ such that $\det(M(n,p,a,q,b))\geq 0$ is 
$b^*(n,p,a,q)$.



Now, it only remains to demonstrate  that $M(n,p,a,q,b)\succeq 0$, for all $b\in[\frac{1}{2},b^*(n,p,a,q)]$.
It suffices to demonstrate that all proper leading principal submatrices of $M(n,p,a,q,b^*)$ 
have a positive determinant. Because $\det M(n,p,a,q,b)$ is decreasing in $b$ (for the relevant $n$, $p$, $a$, $q$),
we will then have that all   proper leading principal submatrices of $M(n,p,a,q,b)$ 
have a positive determinant, for all $b\in[\frac{1}{2},b^*(n,p,a,q)]$.
This will then imply that $M(n,p,a,q,b)\succeq 0$, for all $b\in[\frac{1}{2},b^*(n,p,a,q)]$.

 From Proposition \ref{prop:astar}, we have that 
 principal submatrices that take the form $M_{1/2}(k)$ and $M(k,p,a)$ with $p < k < n$ are positive semidefinite, for $a\in [\frac{1}{2},a^*]$. So it remains to show that  $M(k,p,a,q,b^*)$ has a positive determinant, for $1\leq p < q < k <n $. 


  


Plugging in the formula for $b^*(n,p,a,q)$ into the formula for $\det M(k,p,a,q,b^*)$, 
after some simplifications we have
\begin{align*}
 & \frac{1}{2^k} \underbrace{\left(
 (p+1)(q-p+1) - 4a^2p(q-p)
 \right)}_{(*)}
 \underbrace{\left(
 (k-q+1)(n-q)-(n-q+1)(k-q)
 \right)}_{(**)}.
\end{align*}
It is easy to check that $(*)$ is positive because $a < \tilde{a}(q,p)$,
and $(** )$ is positive because $(k-q+1)/(k-q) > (n-q+1)/(n-q)$, for $1 < q < k <n $.
\end{proof}


With the next two results (proofs deferred to Appendix 1), 
we see that insofar as maximizing the value of $b$ in  $M(n,p,a,q,b)$,
if $n,q,a$ are fixed, we should set $p=1$;  and then with $p=1$, we should set $q=n-1$. 

\begin{theorem} 
\label{thm:bp1}
For each  $1\leq p<q<n$,  $a\in (\frac{1}{2},a^*(n,p)]$, and $n \geq 4$, 
we have that 
$b^*(n,1,a,q) \geq b^*(n,p,a,q)$.
\end{theorem}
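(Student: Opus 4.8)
The plan is to compare the two values head-on, using the closed form from Theorem~\ref{thm:bmax}. Writing $c:=4a^2$, both $b^*(n,1,a,q)$ and $b^*(n,p,a,q)$ equal $\tfrac12$ times the square root of $\tfrac{n-q+1}{n-q}$ times a ratio $N(p)/D(p)$, where $N(p):=(p+1)(q-p+1)-cp(q-p)$ and $D(p):=(p+1)(q-p)-cp(q-p-1)$. Since the factor $\tfrac{n-q+1}{n-q}$ is common to both, it suffices to prove $N(1)/D(1)\geq N(p)/D(p)$. First I would record that $D(p)>0$: this is exactly the positivity established in the proof of Theorem~\ref{thm:bmax} (valid because $a\leq a^*(n,p)<\tilde a(q,p)$), and the same argument with $p=1$ gives $D(1)>0$ (here one checks $a\leq a^*(n,p)\leq a^*(n,1)<\tilde a(q,1)$, using that the explicit formula of Proposition~\ref{prop:astar} makes $a^*(n,\cdot)$ largest at the endpoint $p=1$). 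Consequently the claim is equivalent to the polynomial inequality $\Delta(p):=N(1)D(p)-N(p)D(1)\geq 0$.

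The computational heart is to simplify $N$ and $D$. Setting $w:=1-c$ and substituting $u=q-p$, the middle terms collapse and one is left with the clean quadratics
\[
N(p)=-w\,p^2+wq\,p+(q+1), \qquad D(p)=-w\,p^2+w(q-1)\,p+q,
\]
which share the same leading coefficient. Expanding $\Delta(p)=N(1)D(p)-N(p)D(1)$ and collecting powers of $p$, the $p^2$-, $p^1$- and $p^0$-coefficients each telescope to a simple multiple of $w$ or $w^2-w$, and $\Delta$ factors as
\[
\Delta(p)=(c-1)\,(p-1)\,\big((2-c)p+2\big).
\]
As a sanity check, $\Delta(1)=0$, as it must be.

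Finally I would read off the sign. The factor $c-1>0$ since $a>\tfrac12$, and $p-1\geq 0$. It remains to show $(2-c)p+2\geq 0$, i.e. $c\leq 2+\tfrac2p=2\!\left(1+\tfrac1p\right)$. This is precisely where the hypothesis $a\leq a^*(n,p)$ enters: by Proposition~\ref{prop:astar}, $c=4a^2\leq 4\,a^*(n,p)^2=\left(1+\tfrac1p\right)\!\left(1+\tfrac1{n-p}\right)$, and since $p<q<n$ forces $n-p\geq 1$, we have $1+\tfrac1{n-p}\leq 2$, whence $c\leq 2\!\left(1+\tfrac1p\right)$, exactly as needed. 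Therefore $\Delta(p)\geq 0$, and dividing by $D(1)D(p)>0$ gives $N(1)/D(1)\geq N(p)/D(p)$, i.e. $b^*(n,1,a,q)\geq b^*(n,p,a,q)$.

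The main obstacle is entirely the bookkeeping of the second step: the raw $\Delta(p)$ is a cluttered quadratic in $p$ whose coefficients involve $q$ and $c$, and the structure only emerges after one reduces $N,D$ to the two quadratics above and watches the coefficients cancel into the factored form $(c-1)(p-1)\big((2-c)p+2\big)$. Once that factorization is in hand, the sign analysis is immediate, and the bound $a\leq a^*(n,p)$ is seen to be exactly (indeed slightly more than) what the final factor requires.
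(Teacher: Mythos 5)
Your proof is correct, and it takes a genuinely different route from the paper's. The paper treats $\beta(p):=(b^*(n,p,a,q))^2$ as a function of a \emph{continuous} variable $p$, computes $\partial\beta/\partial p$, shows the sign is governed by the factor $(2a-1)p-1$ so that $\beta$ is decreasing then increasing (hence quasiconvex) in $p$, concludes the maximum over $[1,q-1]$ is at an endpoint, and then separately proves $b^*(n,1,a,q)\geq b^*(n,q-1,a,q)$ via two auxiliary claims that invoke the exact value of $a^*(n,1)$ from Proposition \ref{prop:astar}. You instead compare $N(1)/D(1)$ and $N(p)/D(p)$ directly through the cross-difference $\Delta(p)=N(1)D(p)-N(p)D(1)$, which (I verified) does factor as $(4a^2-1)(p-1)\bigl((2-4a^2)p+2\bigr)$, after which the sign analysis is immediate; the positivity of $D(p)$ and $D(1)$ is correctly inherited from the argument in Theorem \ref{thm:bmax} (using $a^*(n,p)\le a^*(n,1)<\tilde a(q,1)$ for the $p=1$ case). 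Your route is shorter, avoids calculus and the endpoint comparison entirely, and exposes that the hypothesis $a\le a^*(n,p)$ is used only through the weaker bound $4a^2\le 2(1+\tfrac1p)$ (since $1+\tfrac1{n-p}\le 2$); the paper's route is longer but yields the quasiconvexity of $\beta$ in $p$ as a structural byproduct. One cosmetic remark: you should note explicitly that $N(p)/D(p)\ge 0$ (equivalently that $b^*(n,p,a,q)$ is real), which follows from $N(p)=2^q\det M(q,p,a)\ge 0$ for $a\le a^*(n,p)\le a^*(q,p)$, so that the inequality between squares transfers to the nonnegative square roots.
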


\begin{theorem}\label{thm:end}
$b^*(n,1,a,q)$ is maximized over $q$ for $1<q< n$
at $q^*(n,1,a)=n-1$.  
\end{theorem}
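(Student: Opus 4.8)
The plan is to maximize $b^*(n,1,a,q)$ over $q$ by treating it as a one-variable function of $q$ (with $n$ and $a$ fixed) and showing it is increasing on the relevant integer range, so that the maximum is attained at the largest feasible value $q=n-1$.

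The plan is to work with the squared quantity $R(q) := 4\,b^*(n,1,a,q)^2$, which by Theorem~\ref{thm:bmax} (with $p=1$) and a short simplification using the abbreviation $c := 1-2a^2$ takes the compact factored form
\[
R(q) = \frac{n-q+1}{n-q}\cdot\frac{c(q-1)+1}{c(q-2)+1} =: A(q)\,B(q).
\]
Throughout the feasible range $a\in[\tfrac12,a^*(n,1)]$ (equivalently $c\in[-\tfrac{1}{n-1},\tfrac12]$ by Proposition~\ref{prop:astar}) the numerator and denominator of $B(q)$ are positive; this is exactly the sign information already extracted in the proof of Theorem~\ref{thm:bmax}. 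Since $b^*=\tfrac12\sqrt{R}$ is increasing in $R$, it suffices to maximize $R$ over the integers $q\in\{2,\dots,n-1\}$ (the case $n=3$ being vacuous). I would treat $q$ as a continuous variable and study $g(q):=\tfrac{d}{dq}\log R(q)$, which telescopes to
\[
g(q) = \frac{1}{(n-q)(n-q+1)} - \frac{c^2}{\big(c(q-1)+1\big)\big(c(q-2)+1\big)}.
\]

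The crux is that $A$ is increasing while $B$ is decreasing, so $R$ need not be monotone; in fact for $a$ near $\tfrac12$ it dips in the interior. This forces a split on the sign of $c$. When $c>0$ (i.e. $a<\sqrt2/2$) both displayed terms of $g$ are manifestly increasing in $q$, so $g$ is increasing and $\log R$ is \emph{convex}; a convex function attains its maximum on an interval at an endpoint, so it only remains to compare $R(2)$ and $R(n-1)$. When $c\le 0$ (i.e. $a\ge\sqrt2/2$) I would instead prove $g(q)\ge 0$ outright, giving monotonicity and hence the maximum directly at $q=n-1$: clearing the (positive) denominators reduces $g\ge0$ to a polynomial inequality in $q$ which, after substituting $\delta:=-c(n-1)\in[0,1]$, is linear in $q$ with nonpositive slope, so its minimum over the range sits at $q=n-1$ and there reduces to a quadratic in $\delta$ that is nonnegative on $[0,1]$ (with a root at $\delta=1$, the boundary case $a=a^*(n,1)$ where $R\equiv1$).

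For the endpoint comparison needed in the convex case I would compute $R(2)=\tfrac{(n-1)(c+1)}{n-2}$ and $R(n-1)=\tfrac{2(c(n-2)+1)}{c(n-3)+1}$, cross-multiply by the positive common denominator, and collect in powers of $c$; the resulting difference factors neatly as $(n-3)\big(1-c\big)\big((n-1)c+1\big)$ — the discriminant of the quadratic in $c$ collapsing to $n^2$ — which is plainly nonnegative for $n\ge3$ and $c\in(0,\tfrac12]$. This gives $R(n-1)\ge R(2)$ and closes the $c>0$ case.

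I expect the main obstacle to be precisely the non-monotonicity of $R$: a naive ``the derivative is positive'' argument fails for $a$ near $\tfrac12$, so one must recognize the convex/unimodal structure of $\log R$ and supply the separate endpoint inequality. The $c<0$ regime is the other delicate point, since there $\log B$ is concave and the clean monotonicity of $g$ is lost; what rescues the argument is the a-priori bound $|c|\le\tfrac{1}{n-1}$ coming from $a\le a^*(n,1)$ (Proposition~\ref{prop:astar}), which is exactly the ingredient that makes the polynomial inequality $g\ge0$ hold.
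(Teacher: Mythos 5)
Your proposal is correct, and I checked the key computations: the factored form $R(q)=\frac{n-q+1}{n-q}\cdot\frac{c(q-1)+1}{c(q-2)+1}$ with $c=1-2a^2\in[-\frac{1}{n-1},\frac12]$ is right, the telescoped logarithmic derivative is right, the linear-in-$q$ reduction in the $c\le 0$ case has slope $2c(c(n-1)+1)\le 0$ and at $q=n-1$ collapses to $\frac{(\delta-1)((n-4)\delta-(n-1))}{n-1}\ge 0$ for $\delta\in[0,1]$, and the endpoint difference does factor as $(n-3)(1-c)((n-1)c+1)\ge 0$. Strategically this is the same plan as the paper's proof: establish that $\beta(q)=(b^*(n,1,a,q))^2$ is unimodal (quasiconvex) in $q$ so the maximum sits at $q=2$ or $q=n-1$, then verify $b^*(n,1,a,n-1)\ge b^*(n,1,a,2)$ directly. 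The execution differs, though. The paper computes the unique stationary point $\bar q(a)=\frac n2+1+\frac{1}{2(2a^2-1)}$ explicitly, shows it lies outside $[\frac n2+1,\,n-1]$ using monotonicity of $\bar q$ in $a$ and the value of $a^*(n,1)$ from Proposition \ref{prop:astar}, and checks the sign of $\partial\beta/\partial q$ at $q=n-1$; you instead substitute $c=1-2a^2$ and split on its sign, getting log-convexity of $R$ when $c>0$ and outright monotonicity when $c\le 0$. Your route buys cleaner bookkeeping — the factorizations $(n-3)(1-c)((n-1)c+1)$ and $(\delta-1)((n-4)\delta-(n-1))$ are transparent, and the role of the constraint $a\le a^*(n,1)$ is isolated in the single inequality $(n-1)c+1\ge 0$ — at the cost of a two-case argument; the paper's route gives slightly more information (the exact location of the interior critical point) but leans on messier derivative expressions. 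Your final endpoint inequality is, up to positive factors, identical to the paper's closing display, since $2a^2(n-1)-n=-((n-1)c+1)$. One cosmetic remark: you should state explicitly that the $c=0$ case ($a=\sqrt2/2$) is covered, where $g(q)=\frac{1}{(n-q)(n-q+1)}>0$ trivially, and that $n=3$ makes the claim vacuous.
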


Note that by Theorems \ref{thm:bp1} and \ref{thm:end}, $b^*(n,p,a,n-1) \geq b^*(n,p,a,q)$.
Symmetrically, corresponding to the last four results,
we can establish: (i) a formula for the maximum value of $b$ when $a=1/2$, (ii) 
a formula for the maximum value $a^*$ of $a$, (iii) that the
$a^*$ is increasing in $q$, and (iv)  that  $a^*$ is maximized over $p$ at $p^*=1$.

\section{Local search on tridiagonal masks}\label{sec:masklocalsearch} 

Our goal is to do a local search in the space of tridiagonal masks, so
as to get a (tridiagonal) mask $M$ that leads to a 
low value of $f(C\circ M,s)$ for some upper-bounding method $f(\cdot,s)$
applied to \ref{MESP}. 
Besides exact calculation by dynamic programming of $z(C\circ M)$, we
work with the following additional bounding methods:
\begin{itemize}
    \item 
The (masked) \emph{spectral bound} for $z(C\circ M,s)$:
\[
\textstyle
\sum_{\ell=1}^s \log \lambda_\ell(C\circ M),
\]
where $\lambda_\ell(\cdot)$ denotes the $\ell$-th greatest eigenvalue
When $M=I$, the masked spectral bound is the \emph{diagonal bound}:
the sum of the logs of the $s$ biggest diagonal components of $C$.
The spectral bound was first presented in  \cite{KLQ}, and its masked versions in \cite{HLW,AnstreicherLee_Masked,BurerLee}. 
\item
The (masked) \emph{linx bound} for $z(C\circ M,s)$ is the optimal value of the following convex-optimization relaxation:
\begin{align*}
&\max~ \textstyle \frac{1}{2} \left(\ldet(\gamma (C\circ M)  \Diag(x)(C\circ M)+\Diag(\mathbf{e}-x))-s\log \gamma \right)\\[3pt] 
&\quad\mbox{s.t.}\quad \mathbf{e}^\top x=s,~ 0\le x\le \mathbf{e},
\end{align*}
where $\gamma>0$ is a scaling parameter that must be judiciously selected,
and $\mathbf{e}$ is an all-ones vector. 
The linx bound was first presented in  \cite{Kurt_linx},
and its optimal scaling was studied in \cite{Kurt_linx,Mixing}.
\end{itemize}

Note that $z(C,s)$ is permutation invariant. That is,
if $\Pi$ is an $n\times n$ permutation matrix,
then $z(C,s)=z(\Pi C\Pi^\top,s)$.
Moreover the value of most  bounds applied to
$C$ and $\Pi C\Pi^\top$ are identical.
But these observations are \emph{not} at all generally true for $C\circ M$
compared to $(\Pi C\Pi^\top) \circ M$ (unless $M$ is permutation invariant, for example $M=I_n$ or $M=J_n$),
and so we can try to optimize the bound produced by a bounding method,
even for a fixed mask $M$, by varying the permutation matrix $\Pi$.
As a first phase, we start with $M$ equal to the $\frac{1}{2}$-mask of order $n$,
and we do a local search, using minimization of the spectral bound as a criterion, 
applied to $(\Pi C\Pi^\top) \circ M$. Each permutation matrix $\Pi$ 
is in one-to-one correspondence with a permutation $\pi$ of $N$
(i.e., each of the $n!$ permutations
$\pi$ is an ordered set $(\pi_1,\pi_2,\ldots,\pi_n)^\top$ of the $n$ distinct elements of $N=\{1,2,.\ldots,n\}$, so that $\Pi(1,2,\ldots,n)^\top = (\pi_1,\pi_2,\ldots,\pi_n)^\top)$). 
Our local moves correspond to choosing an $i,j$ with $1\leq i < j \leq n$,
and then replacing $\pi_i,\pi_{i+1},\ldots,\pi_{j-1},\pi_j$ with
$\pi_j,\pi_{j-1},\ldots,\pi_{i+1},\pi_i$. We do a best-improvement local search;
upon convergence, we replace $C$ with $\Pi C\Pi^\top$. 

Next, we proceed to our second phase.
As a general step, we have a \emph{blocked tridiagonal mask}
$M:=\Diag(M[S_1,S_1],M[S_2,S_2], \ldots, M[S_p,S_p])$,
with $p\geq 1$, where the $S_i$ partition $N$, each $S_i$ 
is a nonempty  ordered contiguous set, each $M[S_i,S_i]$ is
a tridiagonal mask (of order $|S_i|$),
and for all $1\leq i<j\leq p$, 
all elements of $S_i$  are less than all elements of $S_j$. 
In one-to-one correspondence with the partitioning is its \emph{signature}
$(|S_1|,|S_2|,\ldots,|S_i|)$, a sequence of positive integers summing to $n$. 
Let  $(s_1,s_2,\ldots,s_p)$ be the current signature.
We do local search on the sets underlying the signature.
We want to choose a rich set of local moves that we
can reasonably search over for a best-improving move. 
The moves that we consider are:
\begin{itemize}
    \item \underline{Merge a pair of adjacent blocks}: If $p>1$, choose some $1\leq i<p$, and replace $s_i,s_{i+1}$ with $s_i+s_{i+1}$.
    \item \underline{Split a block}: If $p<n$, choose some $i$ with $s_i>1$, choose some $1\leq t <s_i$, and replace $s_i$ with $t,s_i-t$.
    \item \underline{Interchange}: If $p>1$ and not all $s_i$ are equal, 
    choose some $1\leq i < j \leq p$, with $s_i\not=s_j$, and swap $s_i$ with $s_j$.
\end{itemize}
The number of available moves  at each step of the local search is $\mathcal{O}(n^2)$, which 
we can reasonably search over.

Considering Theorem \ref{thm:end} (and its analogue for $a^*$),
for the purpose of our local search,
we will restrict our attention to $M[S_i,S_i]$
that are of the form $M(|S_i|,1,a,|S_i|-1,b)$.
So given an $S_i$, the only flexibility that we will take is
in choosing $a$ and $b$, only differing from an order-$|S_i|$
mask in the ``end pairs'':
\[
M[S_i,S_i]:=\left(
  \begin{array}{ccccccccc}
    1 & a & 0 & & & &  \\
    a & 1 & \frac{1}{2} & & & & & &\\
    0 & \frac{1}{2} & 1 &&  &  & & & \\
     &  &  \ddots  & \ddots & \ddots &  & & & \\
        &  &  &  \frac{1}{2} & 1 &\frac{1}{2} &  & &\\
   &&    &  &  \ddots  & \ddots & \ddots &  & \\   
  &&   & &  &  & 1 & \frac{1}{2} &  0\\
  &&  & &  & & \frac{1}{2}  & 1 & b \\
 &&   & &  &  & 0 & b & 1 \\
  \end{array}
\right),
\]

For any new block $M[S_i,S_i]=M(|S_i|,1,a,|S_i|-1,b)$ created in the local search over signatures,
we consider the following choices of $(a,b)$: $(\frac{1}{2},b)$ with $b$ maximized
and $(a,\frac{1}{2})$ with $a$ maximized. We calculate these maximum values using Theorem
\ref{thm:bmax}  (and its analogue for $a^*$).

We take these local search moves, and we perform a best-improvement local search on $f(C\circ M,s)$,
where $f$ is the spectral bound. Note that calculation of the spectral bound is quite fast and also
easily parallelizes across the blocks.
Then, we switch to the linx bound (generally a better bound, but slower
to calculate --- and not easily parallelized), continuing the local search.
Finally, we further continue the 
local search, using our dynamic-programming recursion to exactly calculate $z(C\circ M,s)$ at each step.
We note that  linx solves are much faster than dynamic-programming solves (nearly twice as fast in our experiments),
justifying the linx phase in our search.

\begin{table}[ht]
  \begin{center}
\begin{tabular}{c|c|c|c|c|c|c|c|c}
  $C$ & $n$ & $s$ & $\mbox{linx}(\frac{1}{2})$ & $\lambda(\frac{1}{2})$ & $\lambda(\Pi)$  & $\lambda(\Sigma)$ & $\mbox{linx}(\Sigma)$  & $\mbox{DP}(\Sigma)$      \\
  \hline
   $C_{63}^{-1}$ &  63 &  32 &  3.2512 & 3.2469  & 2.9521  & 2.7197  &  2.4828 & 2.4828 \\
   $C_{63}^{-1} + 0.125I$ & 63 & 32  & 3.1841  &  3.1801 & 2.8821  & 2.6584 & 2.4205  & 2.4205  \\
   $C_{63}^{-1} + 0.25I$  & 63  & 32  &  3.1182 & 3.1144  & 2.8302  & 2.5994  & 2.3649 &  2.3649  \\
   $C_{63}^{-1} + 0.375I$ & 63  &  32 &  3.0523 &  3.0488 & 2.7585  & 2.5404   & 2.3092 & 2.3092 \\
   $C_{63}^{-1} + 0.5I$ & 63 &  32 &  2.9891 & 2.9858  & 2.6992 & 2.4838 & 2.2558 & 2.2558 \\
   \hline
    $C_{1} $ & 100 &  50 &  5.7979 & 5.9629 & 5.4154  & 5.2334  & 4.8091  & 4.8091   \\
    $C_{2} $ & 100 &  50 & 5.6529  & 5.8427  & 5.1636  & 4.8221  & 4.3299  &  4.3299  \\
    $C_{3} $ & 100 &  50 & 6.0613  & 6.1358 & 5.6738  & 5.3606  & 4.8532  & 4.8532  \\
    $C_{4} $ & 100 &  50 & 5.2033 & 5.4311 & 4.8394  & 4.6112  & 4.2871  & 4.2871  \\
    $C_{5} $ & 100 &  50 & 4.835 & 4.8861 & 4.4351 & 4.1941 & 3.8815 &  3.8815 \\
    \hline
    $C_{6} $ & 100 &  50 & 0.511  & 1.6225 &  0.54995  & 0.4410  & 0.1263  &   0.1263  \\
    $C_{7} $ & 100 &  50 &  0.3095 & 1.3056  &  0.33 & 0.1901  & 0.1332  &   0.1332 \\
    $C_{8} $ & 100 &  50 &  0.3921 & 1.3292  & 0.4362  &  0.3298 & 0.2312  &  0.2312  \\
    $C_{9} $ & 100 &  50 & 0.3477  & 1.598  & 0.3945  & 0.2713  & 0.1984  &  0.1984  \\
    $C_{10} $ & 100 &  50 & 0.2422  & 1.4178  & 0.2591  & 0.1735  & 0.133  &  0.133  \\
    \hline
    $C_{11} $ & 100 &  50 &  0.3715 & 1.5703 & 0.382 & 0.2607  & 0.1818  &  0.1818  \\
    $C_{12} $ & 100 &  50 & 0.3114 & 1.3027 & 0.3289 & 0.1818  &  0.1289 &  0.1289  \\
    $C_{13} $ & 100 &  50 & 0.3859  & 1.3201  & 0.4287  &  0.30696 & 0.215  &  0.215  \\
    $C_{14} $ & 100 &  50 &  0.3447 & 1.5994  & 0.3923  & 0.2725  & 0.2182  &  0.2182  \\
    $C_{15} $ & 100 &  50 & 0.2579 & 1.4405  & 0.2839  & 0.2072  & 0.1635  &    0.1635\\
    \hline
    $C_{16} $ & 100 &  50 &  0.7153 & 10.628 & 0.8203 & 0.8202  & 0.8202  &  0.8201  \\
    $C_{17} $ & 100 &  50 & 0.7309 & 10.641 & 0.8124 & 0.8123  &  0.8123 &  0.8122  \\
    $C_{18} $ & 100 &  50 & 0.8497  & 10.788  & 0.9727  &  0.9727 & 0.9727  &  0.9724  \\
    $C_{19} $ & 100 &  50 & 0.9758 & 10.907 & 1.1094  &  1.1093 & 1.1093  & 1.1090    \\
    $C_{20} $ & 100 &  50 & 0.7178 & 10.647  & 0.8106  & 0.8105  &  0.8105 &    0.8103
\end{tabular}
    \caption{Entropy gaps: $s\sim n/2$}\label{tab:table1}
  \end{center}
\end{table}

We implemented our algorithm in \texttt{Matlab},
and we carried out some  experiments designed to demonstrate the potential of our approach, compared to  
the masked linx bound using the $\frac{1}{2}$-mask.
Our test instances are described in Appendix 2.
In our experiments presented in this section, we took $s\sim  n/2$.\footnote{Results of further experiments, with $s\sim n/4$ and $s\sim 3n/4$, are presented in Appendix 3.} 
Our results appear in Table \ref{tab:table1}, where we tabulated (difference)
gaps to a heuristically-generated lower bound on $z(C,s)$ (see \cite[Section 4]{KLQ}).
``$C$" indicates the covariance matrix that we used.
``$\mbox{linx}(\frac{1}{2})$'' is the gap value of the masked linx bound using the $\frac{1}{2}$-mask.
``$\lambda(\frac{1}{2})$'' is the spectral-bound gap using the $\frac{1}{2}$-mask.
Each remaining  column starts from the solution of the previous column.
``$\lambda(\Pi)$''  is the spectral-bound gap after performing a first-phase (permutation-based local search).
``$\lambda(\Sigma)$'' is the spectral-bound gap after performing a second phase (signature-based local search).
``$\mbox{linx}(\Sigma)$'' is the linx-bound gap after performing a second phase (signature-based local search).
``$\mbox{DP}(\Sigma)$'' is the DP-bound gap after performing a second phase (signature-based local search).
In our experiments, we can see clear and consistent
improvements from $\lambda(\frac{1}{2})$ to $\lambda(\Pi)$
to $\lambda(\Sigma)$ to $\mbox{linx}(\Sigma)$. 
Furthermore, we can see that  $\mbox{linx}(\Sigma)$ improves on $\mbox{linx}(\frac{1}{2})$, except for $C_{16}$ to $C_{20}$. 
Interestingly, for those we can see a small improvement in $\mbox{DP}(\Sigma)$ compared to $\mbox{linx}(\Sigma)$ (in  contrast to the other test problems). 
Overall, our method gives a good way to get a better mask for
the linx bound than the $\frac{1}{2}$-mask.
Considering the final column, $\mbox{DP}(\Sigma)$, 
we can see that the masked linx bound using the final mask $M$, consistently gives almost
exactly $z(C\circ M,s)$. That is, the dynamic program establishes that
the masked linx bound with the final mask $M$ nearly gives the optimal value of $z(C\circ M,s)$.

The plot of Figure \ref{fig:gap_0.5n} shows the average portion of the gap between $\lambda(\frac{1}{2})$ and $DP(\Sigma)$ that is bridged by each
$\Delta\in\{ \lambda(\frac{1}{2}), \lambda(\Pi), \lambda(\Sigma),  \mbox{linx}(\Sigma), \mbox{DP}(\Sigma)\}$. That is, we plot averages of $\frac{\lambda(\frac{1}{2}) - \Delta}{\lambda(\frac{1}{2})-\mbox{\scriptsize DP}(\Sigma)}$; averaged over each of the five
types of instances. In this way, we can understand the improvements obtained by
successively adding more bounding effort.

%

\begin{figure}
    \centering
    \includegraphics[width =280pt]{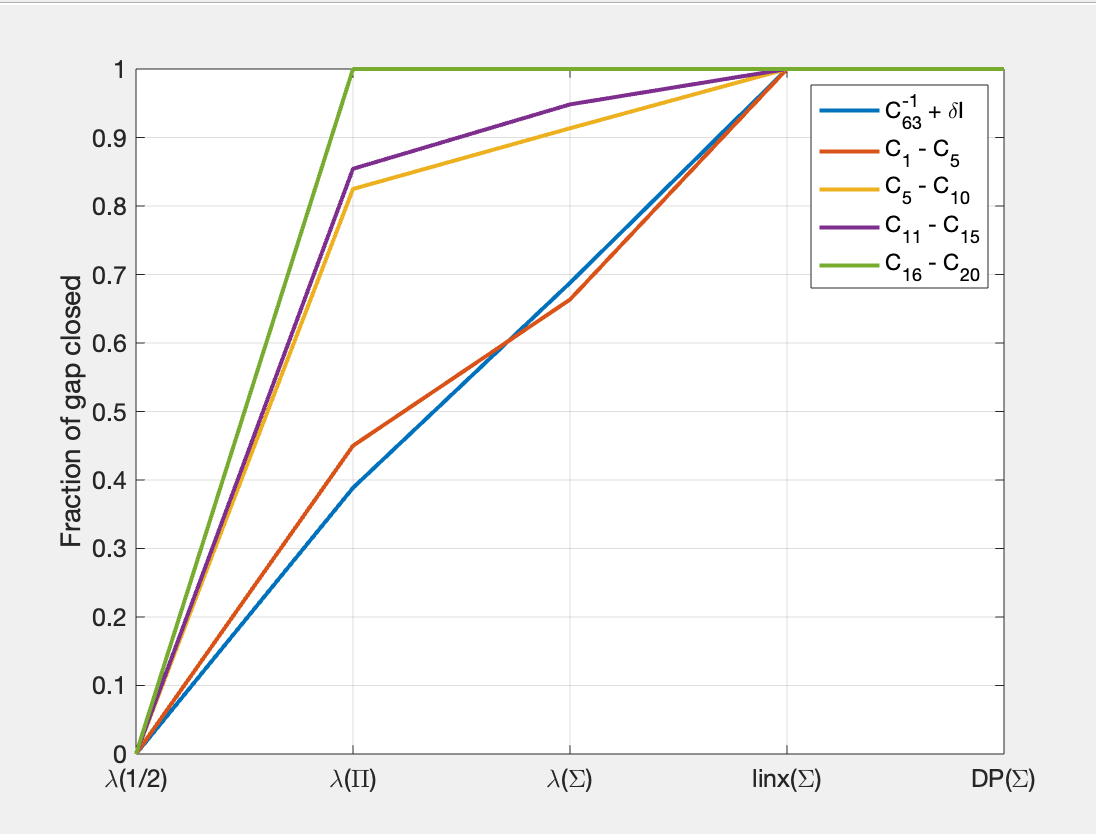}
    \caption{Average fraction of gap closed by each masking phase}
    \label{fig:gap_0.5n}
\end{figure}

\FloatBarrier

\section*{Appendix 1: Deferred proofs} 

\subsection*{Proof of Lemma \ref{lem:det}}



\noindent Our proof is by induction on $n$, and our base cases are $n=3$ and $n=4$.
For $n=3$, we have
\begin{align*}
\det M(3,p,a,q,b) &= \det M(3,1,a,2,b) = (1-a^2) - b^2\\
&= \dfrac{1}{2^{3}} (3-2+1)\Big( {(1+1)(2-1+1) - 4a^2(2-1)}\Big) \\
&\quad -\dfrac{1}{2^{3}} (3-2)4b^2\left((1+1)  (2-1)- 4a^2(2-1-1)\right).
\end{align*}
\noindent
For $n=4$, we have three combinations of $p$ and $q$ to consider.\\

\noindent
Case 1: $p = 1$, $q = 2$.
\begin{align*}
\det M(4,1,a,2,b) &=  \frac{3}{4}(1-a^2) - b^2 \\
&= \dfrac{1}{2^{4}} (4-2+1)\Big( {(1+1)(2-1+1) - 4a^2(2-1)}\Big) \\
&\quad -\dfrac{1}{2^{4}} (4-2)4b^2\left((1+1)  (2-1)- 4a^2(2-1-1)\right).
\end{align*}

\noindent
Case 2: $p = 1$, $q = 3$.
\begin{align*}
\det M(4,1,a,3,b) &=  (1-a^2)(1-b^2) - \frac{1}{4} \\
&= \dfrac{1}{2^{4}} (4-3+1)\Big( {(1+1)(3-1+1) - 4a^2(3-1)}\Big) \\
&\quad -\dfrac{1}{2^{4}} (4-3)4b^2\left((1+1)(3-1)- 4a^2(3-1-1)\right). 
\end{align*}

\noindent
Case 3: $p = 2$, $q = 3$.
\begin{align*}
\det M(4,2,a,3,b) &=  \frac{3}{4}(1-b^2) - a^2 \\
&= \dfrac{1}{2^{4}} (4-3+1)\Big( {(2+1)(3-2+1) - 4a^2(2)(3-2)}\Big) \\
&\quad -\dfrac{1}{2^{4}} (4-3)4b^2\left((2+1)(3-2)- 4a^2(2)(3-2-1)\right).
\end{align*}

Next, we suppose that $n>4$ and that the result holds for matrices of order less than $n$. 
We apply the recursive determinant formula for tridiagonal matrices (Lemma \ref{lem:tridet}) to $M(n,p,a,q,b)$. Because the recursion has a depth of two, we need to consider six combinations of  $p$ and $q$.

Recall our short notation
$M(n,p,a) := M(n,p,a,q,1/2) = M(n,p_1,1/2,p,a)$.
\begin{align*}
   \det M(n,p,a)& =  \dfrac{1}{2^{n}} (n-p)\Big( {(p+1)(n-p) - 4a^2p(n-p-1)}\Big) \\
     &\quad-  \dfrac{1}{2^{n}} (n-p-1)4(1/2)^2\Big( {(p+1)(n-p) - 4a^2p(n-p-1)}\Big)\\
     & =\dfrac{1}{2^{n}} \Big( {(p+1)(n-p+1) - (n-p)4a^2p}\Big).
\end{align*}

\noindent
Case 1: $p < q < n-2$.
    \begin{align*}
    \det M(n,p,a,q,b) &= \det M(n-1,p,a,q,b) - \frac{1}{2^2}\det M(n-2,p,a,q,b)\\
    & = \dfrac{1}{2^{n-1}} (n-q)\Big( {(p+1)(q-p+1) - 4a^2p(q-p)}\Big) \\
    &\qquad -\dfrac{1}{2^{n-1}} (n-q-1)4b^2\left((p+1)  (q-p)- 4a^2p(q-p-1)\right)\\
     &\quad -  \dfrac{1}{2^{n}} (n-q-1)\Big( {(p+1)(q-p+1) - 4a^2p(q-p)}\Big) \\
    &\qquad +\dfrac{1}{2^{n}} (n-q-2)4b^2\left((p+1)  (q-p)- 4a^2p(q-p-1)\right)\\
    & = \dfrac{1}{2^{n}} (n-q+1)\Big( {(p+1)(q-p+1) - 4a^2p(q-p)}\Big) \\
    &\quad -  \dfrac{1}{2^{n}} (n-q)4b^2\Big( {(p+1)(q-p) - 4a^2p(q-p-1)}\Big).
    \end{align*}

\noindent
Case 2:  $p < n-3$, $q= n-1$.
    
    \begin{align*}
    \det M(n,p,a,n-1,b) 
    & = \det M(n-1,p,a) - b^2\det M(n-2,p,a)\\
    & = \dfrac{1}{2^{n-1}} \Big( {(p+1)(n-p) - (n-p-1)4a^2p}\Big) \\
    & \quad - \dfrac{b^2}{2^{n-2}} \Big( {(p+1)(n-p-1)- (n-p-2)4a^2p}\Big) \\
    & = \dfrac{1}{2^{n}} 2\Big( {(p+1)(n-p) - (n-p-1)4a^2p}\Big) \\
    &\quad -  \dfrac{1}{2^{n}} 4b^2\Big( {(p+1)(n-p-1) - (n-p-2)4a^2p}\Big).
    \end{align*}    
    
\noindent
Case 3: $p = n-3$, $q = n-1$.
    
    \begin{align*}
       & \det M(n,n-3,a,n-1,b) = \det M(n-1,n-3,a) - b^2\det M(n-2,n-3,a)\\
        &\quad = \dfrac{1}{2^{n-1}} \Big( {(n-2)(3) - (2)4a^2(n-3)}\Big) 
        - \dfrac{b^2}{2^{n-2}} \Big( {(n-2)(2)- 4a^2(n-3)}\Big) \\
        & \quad = \dfrac{1}{2^{n}} 2\Big( {(n-2)(3) - (2)4a^2(n-3)}\Big) 
        -  \dfrac{1}{2^{n}} 4b^2\Big( {(n-2)(2) - 4a^2(n-3)}\Big).
    \end{align*}    
    
\noindent
Case 4: $p = n-2$, $q = n-1$.

    \begin{align*}
       & \det M(n,n-2,a,n-1,b)  = \det M(n-1,n-2,a) - b^2\det M(n-2,n-3,1/2)\\
        & \quad = \dfrac{1}{2^{n-1}} \Big( {(n-1)(2) - 4a^2(n-2)}\Big)  - \dfrac{b^2}{2^{n-2}} \Big( {(n-2)(2)- (n-3)}\Big) \\
        &\quad = \dfrac{1}{2^{n}} 2\Big( {(n-1)(2) - 4a^2(n-2)}\Big)  -  \dfrac{1}{2^{n}} 4b^2\Big( {n-1}\Big).
    \end{align*}

\noindent
Case 5: $p < n-3$, $q = n-2$.
    \begin{align*}
        \det M(n,p,a,n-2,b) &= \det M(n-1,p,a,n-2,b) - (1/2)^2\det M(n-2,p,a)\\
        & = \dfrac{1}{2^{n-1}} 3 \Big( {(p+1)(n-p-1) - (n-p-2)4a^2p}\Big) \\
        &\qquad  - \dfrac{b^2}{2^{n-2}} (2)\Big( {(p+1)(n-p-2)- (n-p-3)4a^2p}\Big) \\
        &\quad - \dfrac{1}{2^{n}}\Big( {(p+1)(n-p-1) - (n-p-2)4a^2p}\Big) \\
        & = \dfrac{1}{2^{n}} 3\Big( {(p+1)(n-p-1) - (n-p-2)4a^2p}\Big) \\
        &\quad -  \dfrac{1}{2^{n}} (2)4b^2\Big( {(p+1)(n-p-2) - (n-p-3)4a^2p}\Big).
    \end{align*}   
    
\noindent
Case 6:  $p=n-3$, $q=n-2$.
    
    \begin{align*}
        &\det M(n,n-3,a,n-2,b)\\
        & \quad = \det M(n-1,n-3,a,n-2,b) - (1/2)^2\det M(n-2,n-3,a)\\
        &\quad  = \dfrac{1}{2^{n-1}} (2)\Big( {(n-2)(2) - 4a^2(n-3)}\Big) -\dfrac{1}{2^{n-1}} 4b^2\left(n-2\right) \\
        &\qquad - \dfrac{1}{2^{n}} \Big( {(n-2)(2)- 4a^2(n-3)}\Big) \\
        & \quad = \dfrac{1}{2^{n}} (3)    \Big( {(n-2)(2) - 4a^2(n-3)}\Big) 
         -  \dfrac{1}{2^{n}}(2) 4b^2\Big( {(n-2)}\Big).
    \end{align*}   
  \hfill  \qed


\subsection*{Proof of Theorem \ref{thm:bp1}}

\noindent 
First, we note that it is easy to check that  $a^*(n,p)$ is convex and symmetric
about $n/2$ in $p$. So $a^*(n,p)$
is maximized on $[1,n-2]$ at $p=1$.
Therefore, for $a\in (\frac{1}{2},a^*(n,p)]$,
we also have $a\in (\frac{1}{2},a^*(n,1)]$.
Hence, $b^*(n,1,a,q)$ is well defined (for $a\in (\frac{1}{2},a^*(n,p)]$).

We begin by investigating where
the continuous function $\beta(p):=(b^*(n,p,a,q))^2$ is decreasing, for (continuous) $p \in (1, n-2)$.

\begin{align*}
  \frac{\partial \beta(p)}{\partial p} = &\frac{\left(4 a^2-1\right)(n-q+1)}{4(n-q)}\frac{ \left((2a+1)p+1\right)\left((2a-1)p-1\right)}{\left(\left(4
   a^2-1\right) p (p-q+1)+q\right)^2}
\end{align*}
It is clear that all factors in the numerator and denominator are positive
except for $(2a-1)p-1$, which we need to analyze. We can easily see that this
is negative, precisely when $a<\frac{1}{2}\left(1+\frac{1}{p}\right)$.
So, we only need check that $a^*(n,p)< \frac{1}{2}\left(1+\frac{1}{p}\right)$; that is,
we need to check that 
\[
 \frac{1}{2}\sqrt{\left(1+\frac{1}{p}\right)\left(1+\frac{1}{n-p}\right)} < \frac{1}{2}\left(1+\frac{1}{p}\right).
\]
But this easily reduces to $1+\frac{1}{n-p} < 1+ \frac{1}{p}$,  which is
true precisely when $p<n/2$. 
In particular, $\beta(p)$ is decreasing 
on $(1,n/2)$ and increasing on $(n/2,n-2)$.
So it is quasiconvex, and has a maximum on $[1,q-1]$ at an endpoint.

Next, we will demonstrate  that $\Delta b^*(q) :=  b^*(n,1,a,q) - b^*(n,q-1,a,q) \geq 0$, which will complete our proof. After some algebraic manipulations, we have that $\Delta b^*(q) \geq 0$ simplifies to
\begin{align}\label{dbineq}
\dfrac{(1-2a^2)q+2a^2}
{2( (1-2a^2)q + (4a^2  - 1))}
\geq
\dfrac{(1-2a^2)q+2a^2}{
q }.
\end{align}
Claim 1:
$(1-2a^2)q+2a^2 \geq 0$. To see this, first note that it is clear when $a\leq \sqrt{2}/2$.
If $a> \sqrt{2}/2$, then the claim is equivalent to $q \leq \frac{a^2}{a^2-1/2}$. The left-hand side of this last expression is trivially increasing in $q$ and the right-hand side is decreasing in $a$, so it suffices
to check that 
\[
n-1 \leq \frac{(a^*(n,1))^2}{(a^*(n,1))^2 -1/2}.
\]
Using Proposition \ref{prop:astar}, we can check that the right-hand side of this last expression is precisely $n$, and so the claim is verified.


Claim 2: $  (1-2a^2)q + (4a^2  - 1) >0$.  It is clearly true when $1/2\leq a \leq \sqrt{2}/2$.
For $a > \frac{\sqrt{2}}{2}$, the claim is equivalent to $q \leq \frac{4a^2-1}{2a^2-1}$.
Similarly to Claim 1, it suffices to check that 
\[
n<\frac{4(a^{*}(n,1))^2-1}{2(a^{*}(n,1))^2-1}.
\]
Using Proposition \ref{prop:astar}, we can check that the right-hand side of this last expression is precisely $n+1$, and so the claim is verified.


Combining Eq.~\eqref{dbineq}, and the two claims, it remains to show that 
$2( (1-2a^2)q + (4a^2  - 1))\leq q $. But this reduces to showing that 
$(1 - 4a^2)(q-2) \leq 0$, which immediately follows from $a\geq 1/2$ and $q\geq 2$.

\hfill\qed



\subsection*{Proof of Theorem \ref{thm:end}}

\noindent
Let $\beta(q):=(b^*(n,1,a,q))^2$.
We first demonstrate that $\beta(q)$ is quasiconvex in $q$. This implies that the maximum is attained a boundary point, that is $q=p+1$
or $q=n-1$. 
First, we calculate the unique stationary point
\begin{align*}
\bar{q}(a) :=  \frac{n}{2} + 1 + \frac{1}{2(2a^2-1)}. 
\end{align*}
Next, we demonstrate that $\bar{q} \notin [\frac{n}{2} + 1, n-1]$. It is easy to see that $\bar{q}$ is decreasing in $a$.
Then, for $a \in [1/2, \sqrt{2}/2]$, $\bar{q}$ is maximized at $a = 1/2$, and we have $\bar{q}(a) \leq \bar{q}(1/2)= n/2$.
For $a \in (\sqrt{2}/2, a^*(n,1)]$, $\bar{q}$ is minimized at $ a= a^*(n,1)$, and we have $\bar{q}(a) \geq \bar{q}(a^*(n,1)) = n + 1/2$. This implies that for all $a \in [1/2,a^*(n,1)]$, we have $\bar{q} \notin [\frac{n}{2} + 1, n-1]$.

Next, we will demonstrate that 
$\frac{\partial \beta}{\partial q} > 0 $ for $q \geq \frac{n}{2} + 1$.
We consider the case of $q=n-1$, the maximum value  for $q$.
Then we can calculate $\frac{\partial \beta}{\partial q}$ evaluated
at $q=n-1$: 
\[
\frac{\left((2 a^2 -1)(n-4)-1\right) \left((2 a^2-1) (n-1)-1\right)}
{4 \left((2a^2-1)(n-3)-1\right)^2} > 0.
\]
Note that is easy to check that  the numerator is positive for $n \geq 4$, and $a \in [\frac{1}{2},  a^*(n,1)]$. We do this by observing that the two positive roots of the numerator are $\sqrt{\frac{n}{2(n-1)}}$ and $\sqrt{\frac{n-3}{2(n-4)}}$, both of which are greater than or equal to $a^*(n,1) = \sqrt{\frac{n}{2(n-1)}}$. Then we can plug in any relevant $a$ and observe that both the factors in the numerator are negative. This also implies that if $\bar{q} \in [2,\frac{n}{2} + 1]$, then $\beta(q)$ is nondecreasing for the interval $[\bar{q},n-1]$ and either nonincreasing or nondecreasing for $q \in [2,\bar{q}]$. For $\bar{q} \notin [2,n/2+1]$, $\beta(q)$ is strictly increasing.
Because $\beta(q)$ has only one stationary point, at $\bar{q}$,
we can conclude that $\beta(q)$ (and hence $b^*(n,1,a,q)$) is quasiconvex in $q$, and therefore the maximum is either at $q=2$ or $q=n-1$. 

It remains to demonstrate that
$b^*(n,1,n-1,a) \geq  b^*(n,1,2,a)$, which reduces to demonstrating that
\begin{align*}
   \frac{a^2 (n-3) \left(2 a^2 (n-1)-n\right)}{2 (n-2) \left(2 a^2 (n-3)-n+2\right)} & \geq 0. 
\end{align*}
The positive root of the numerator is $\sqrt{\frac{n}{2(n-1)}}$, which is greater than $a^*(n,1)$, and then it is easy to check then that the numerator is 
nonpositive for all  $a \in [1/2, a^*(n,1)]$. Similarly, the positive 
root of the denominator is  $\sqrt{\frac{n-2}{2(n-3)}}$, 
 which is greater than $a^*(n,1)$, and then it is easy to check then that the numerator is 
negative for all  $a \in [1/2, a^*(n,1)]$. The result follows.

\hfill\qed

\section*{Appendix 2: Test instances}

Referring to the first column of 
Table \ref{tab:table1}, we explain how we built our 20 test instances:
\begin{itemize} 
\item $C_{63}$ is a well-known benchmark
covariance matrix generated from 63 chemical data sensors (see \cite{KLQ,Kurt_linx}), for example).
To get some idea of how our results can change when uncorrelated noise is added, we
also experimented with adding different positive multiples of the identity matrix.
We can see that the behavior is similar, for different multiples, while the gaps are smaller.
\item $C_1$ through $C_5$ were generated using the \texttt{Matlab} \texttt{sprandsym} function. This function can take eigenvalues as input; 
for each randomly generated positive-definite matrix we set $\lambda_i := 4^{\frac{n+1-2i}{n-i}}$,
for each $i \in N$. This gives a nice concave decreasing sequence of eigenvalues that is 
preserved under matrix inversion (see \cite{Mixing}).
\item
Each of $C_6$ through $C_{10}$ were generated as follows.
First, we randomly generated a diagonally dominant tridiagonal matrix $\bar{C}$ with:
\vspace{-10pt}
\begin{align*}
& \bar{C}_{i,i} \sim U[2,5],\quad \mbox{ for } i=1,\ldots,n;\\
& \bar{C}_{i,i+1} = \bar{C}_{i+1,i} \sim U[-1,1],\quad \mbox{ for } i=1,\ldots,n-1,
\end{align*}
independently generated. 
Next, we made $m=100,000$ independent samples from the multivariate normal distribution $N(\mathbf{0},\bar{C})$.
From these $m$ samples, we calculated the sample covariance matrix $C$. 
In this way, we get $C$ as a dense noisy version of the tridiagonal $\bar{C}$. 
\item $C_{11}$ through $C_{15}$ are generated in a similar way to $C_6,\ldots,C_{10}$ but with a different $\bar{C}$. In this case tridiagonal $\bar{C}$ is made up $n/2$ blocks of 
$J_2=
(
{\tiny
  \begin{array}{cc}
1&1\\
1&1\\
  \end{array}
}
)
$ 
plus independent random samples from $U[0,0.05]$ added to each diagonal element.
Here we have samples from $n/2$ pairs of bivariate normals, with no correlation 
between pairs and high correlation within a pair.
Again, we made $m=100,000$ independent samples from the multivariate normal distribution $N(\mathbf{0},\bar{C})$, and
from these $m$ samples, we calculated the sample covariance matrix $C$. 
\item For $C_{16}$ through $C_{20}$, we randomly and independently generated
\begin{align*}
& R_i \sim N(0,0.2),\quad \mbox{ for } i=1,\ldots,n;\\
& E_i \sim N(0,1),\quad \mbox{ for } i=0,\ldots,n+1.
\end{align*}
\vspace{-20pt}

\noindent Then, with $d=0.2$,  let 
\[
Y_i := R_i +  E_i+ d\times (E_{i-1} +  E_{i+1}),\quad\mbox{  for } i=1,\ldots,n.
\]
In this way, we have significant correlation between ``neighbors''.  
We made $m=100,000$ independent samples from this distribution, we calculated the sample covariance matrix, and we used this as $C$. 

\end{itemize}

In Table \ref{tab:c_char}, we summarize some statistics for our test matrices.
The ratios $\lambda_1/\lambda_n$ clearly indicate that all of our matrices are
not nearly diagonal. The column `abs sum' tabulates $\Sigma_{i,j} |C[i,j]|$. 
The column `abs sum tridiag' tabulates 
 $\Sigma_{i-j\leq 1} |C[i,j]|$. Hence, the ratios between these two, the tri-ratio' values,
 measures how far from tridiagonal these matrices are. 
 We can see that the the first five have only a very modest degree of tridiagonality. 
 $C_1$--$C_5$ are not tridiagonal at all. 
 Finally, $C_6$--$C_{20}$  are decidedly not
 tridiagonal, but not extremely far from being tridiagonal, as desired. 
 
\begin{table}[!ht]
    \centering
    \begin{tabular}{c|c|c|c|c|c|c|c}
             & &  &  &  &  & abs sum & tri- \\
        $C$ & $n$ & $\lambda_n(C)$ & $\lambda_1(C)$ & $\lambda_1/\lambda_n$ & abs sum &  tridiag & ratio\\
        \hline
        $C_{63}^{-1}$ & 63& 0.65 & 31.50 & 48.42 & 1780.66 & 920.04 & 1.94 \\
        $C_{63}^{-1} + 0.125I$& 63& 0.78 & 31.63 & 40.78 & 1788.54& 927.92& 1.93 \\
        $C_{63}^{-1}1 + 0.25I$ & 63& 0.90 & 31.75 & 35.26 & 1796.41& 935.79 & 1.92 \\
        $C_{63}^{-1} + 0.375I$ & 63 & 1.03 & 31.98 & 31.18 & 1804.29 & 943.67 & 1.91 \\
        $C_{63}^{-1} + 0.5I$ & 63& 1.15 & 32.00 & 27.81 & 1812.16& 951.54 & 1.90 \\
        \hline
        $C_1$ & 100 & 0.25 & 4.00 & 16.00 & 582.96 & 146.62 & 3.98\\
        $C_2$ & 100 & 0.25 & 4.00 & 16.00 & 620.53 & 143.78 & 4.32\\
        $C_3$ & 100 & 0.25 & 4.00 & 16.00 & 543.63 & 146.14 & 3.72 \\
        $C_4$ & 100 & 0.25 & 4.00 & 16.00 & 580.94 & 144.61 & 4.02 \\
        $C_5$ & 100 & 0.25 & 4.00 & 16.00 & 574.24 & 143.15 & 4.01 \\
        \hline
        $C_6$ & 100 & 1.16 & 5.74 & 4.97 & 546 & 459.67& 1.19\\
        $C_7$ & 100 & 1.41 &5.74 & 4.07 & 520.34 & 435.59 & 1.19 \\
        $C_8$ & 100 & 1.38 & 5.91 & 4.27 & 544.32 & 459.57 & 1.18\\
        $C_9$ & 100 & 1.17 & 5.78 & 4.94 & 537.27 & 451.79 & 1.19\\
        $C_{10}$ & 100 & 1.58 & 5.47 & 3.45 & 534.84 & 451.56 & 1.18\\
        \hline
        $C_{11}$ & 100 & 1.11 & 5.88 & 5.31 & 524.93 & 442.32 & 1.19 \\
        $C_{12}$ & 100 & 1.42 & 5.83 & 4.09 & 518.17 & 435.43 & 1.19\\
        $C_{13}$ & 100 & 1.38 & 5.90 & 4.29 & 546.19 & 459.26 & 1.19\\
        $C_{14}$ & 100 & 1.16 & 5.74 & 4.93 & 535.49 & 452.04 & 1.18 \\
        $C_{15}$ & 100 & 1.59 & 5.45 & 3.43 & 536.84 & 450.93 & 1.19\\
        \hline 
        $C_{16}$ & 100 & 0.49 & 2.51 & 5.12 & 274.69 & 212.78 & 1.29\\
        $C_{17}$ & 100 & 0.49 & 2.53 & 5.14 & 276.40 & 213.01 & 1.30\\
        $C_{18}$ & 100 & 0.49 & 2.52 & 5.13 & 275.30 & 213.04 & 1.29\\
        $C_{19}$ & 100 & 0.49 & 2.52 & 5.11 & 275.19 & 212.97 & 1.29\\
        $C_{20}$ & 100 & 0.49 & 2.51 & 5.09 & 275.68 & 212.94 & 1.29\\[5pt]
    \end{tabular}
    \caption{Test-matrix statistics} \label{tab:c_char}
\end{table}

\FloatBarrier

\section*{Appendix 3: Further experiments}

Figure \ref{fig:morephases} is of the same type as Figure \ref{fig:gap_0.5n},
but now with $s\sim n/4$ and $s\sim 3n/4$. 
A more detailed view of these experiments, following what we presented in Table \ref{tab:table1}, is in Tables \ref{tab:table4} and \ref{tab:table5}. 
While the general trends seen for $s\sim n/2$ persist,
we do see now several instances where $\mbox{DP}(\Sigma)$
improves upon $\mbox{linx}(\Sigma)$.

\begin{figure}[ht]
     \centering
     \begin{subfigure}[b]{0.49\textwidth}
         \centering
         \includegraphics[width=\textwidth]{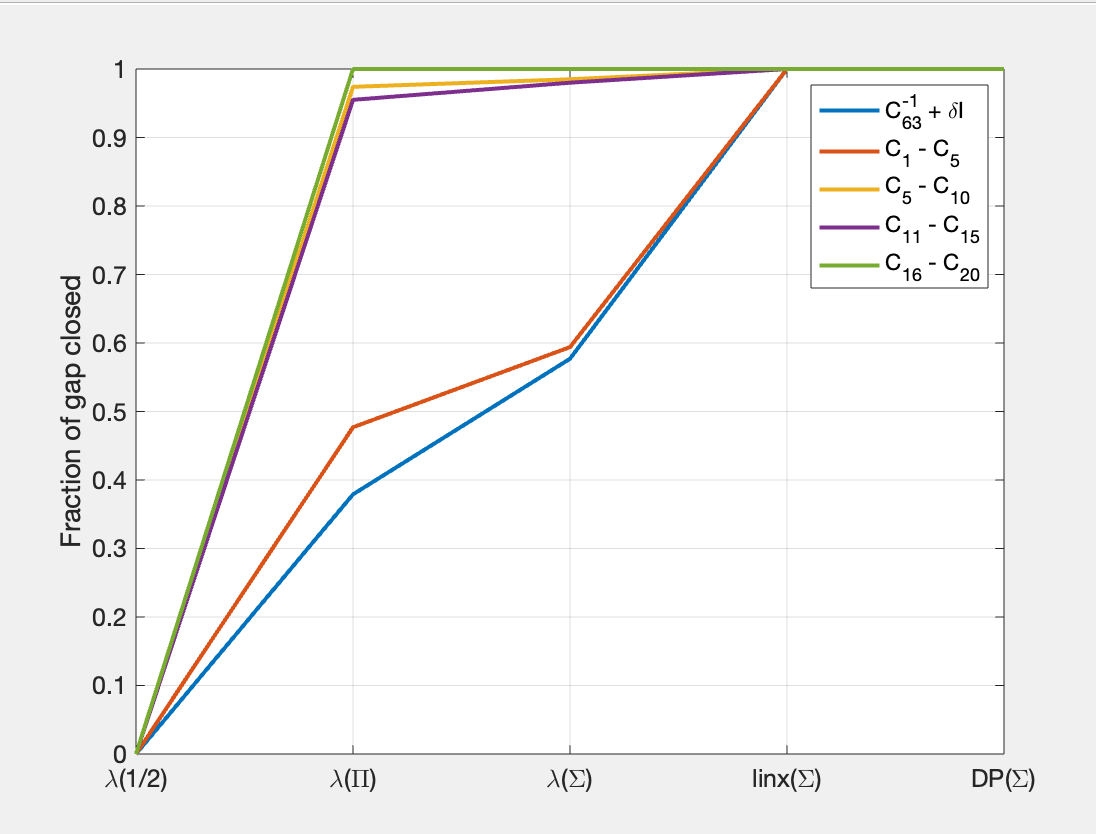}
         \caption{}
         \label{fig:gap_0.25n}
     \end{subfigure}
     \hfill
     \begin{subfigure}[b]{0.49\textwidth}
         \centering
         \includegraphics[width=\textwidth]{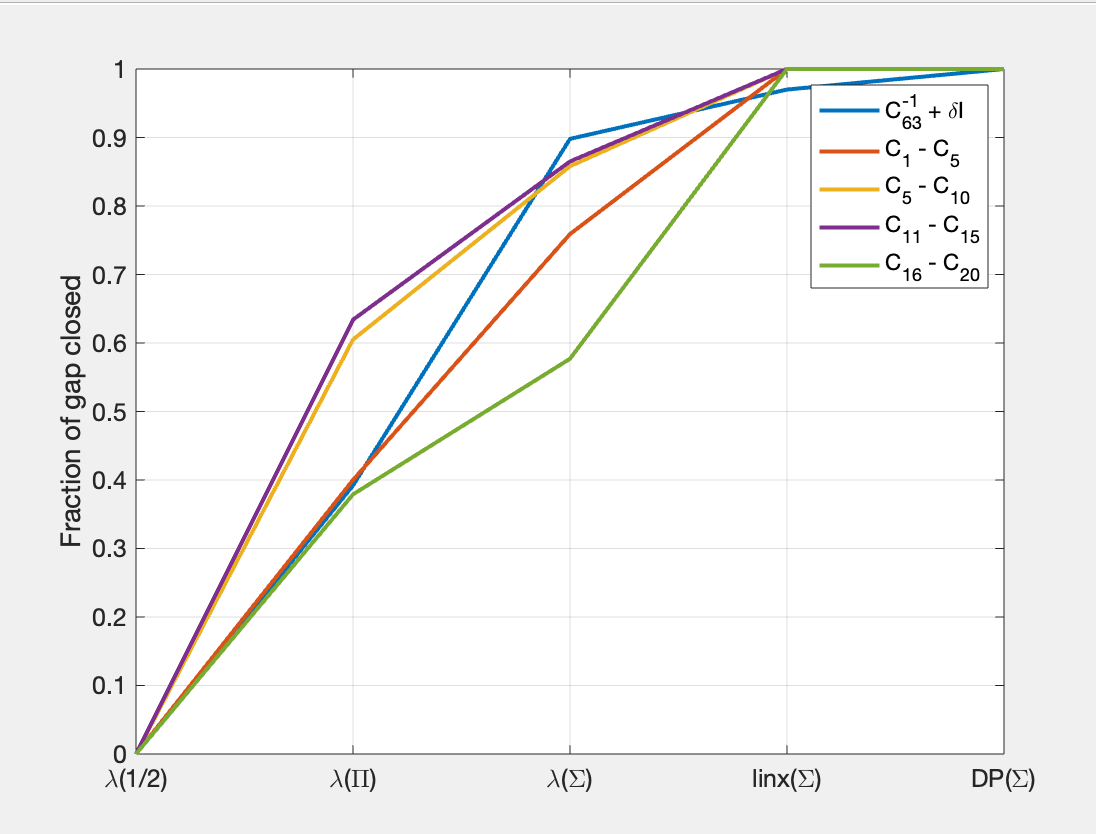}
         \caption{}
         \label{fig:gap_0.75n}
     \end{subfigure}
     \caption{Average fraction of gap closed by each masking phase}\label{fig:morephases}
     \hfill
\end{figure}

\begin{table}[ht]
  \begin{center}
\begin{tabular}{c|c|c|c|c|c|c|c|c}
  $C$ & $n$ & $s$ & $\mbox{linx}(\frac{1}{2})$ & $\lambda(\frac{1}{2})$ & $\lambda(\Pi)$  & $\lambda(\Sigma)$ & $\mbox{linx}(\Sigma)$  & $\mbox{DP}(\Sigma)$      \\
  \hline
   $C_{63}^{-1}$          & 63 &  15 &  0.9839 & 0.9880  & 0.8745  & 0.8150  &  0.6880 & 0.6880 \\
   $C_{63}^{-1} + 0.125I$ & 63 &  15 &  0.9696 & 0.9739  &  0.8616 & 0.8027 & 0.6772 & 0.6772    \\
   $C_{63}^{-1} + 0.25I$  & 63 &  15 &  0.9558 & 0.9601  & 0.8490  & 0.7908  & 0.6667 &  0.2334  \\
   $C_{63}^{-1} + 0.375I$ & 63 &  15 &  0.9423 &  0.9467 & 0.8368  & 0.7791   & 0.6565 & 0.2262 \\
   $C_{63}^{-1} + 0.5I$   & 63 &  15 &  0.9292 & 0.9336  & 0.8248 & 0.7678 & 0.6466 & 0.2192 \\
   \hline
    $C_{1} $ & 100 &  25 & 1.0899 & 1.2264 & 0.9922 & 0.9616  & 0.8183  & 0.8183  \\
    $C_{2} $ & 100 &  25 &  1.0936 & 1.1161 & 00.9034  & 0.7943  & 0.5885  & 0.5885  \\
    $C_{3} $ & 100 &  25 & 0.8229 & 0.8743 & 0.7404  & 0.7144  & 0.6201  & 0.6201  \\
    $C_{4} $ & 100 &  25 & 0.9130 & 0.9969 & 0.7969  & 0.7766  & 0.5616  & 0.5616  \\
    $C_{5} $ & 100 &  25 & 0.8480 & 0.8597 & 0.7065 & 0.6521 & 0.4981 & 0.4981  \\
    \hline
    $C_{6} $ & 100 &  25 &  0.0592 & 0.9574 & 0.0775  & 0.0763  & 0.0656  & 0.0656   \\
    $C_{7} $ & 100 &  25 &  0.0277  & 0.6608  & 0.0193  & 0.0147  & 0.0026  &  0.0026  \\
    $C_{8} $ & 100 &  25 &   0.1028  & 0.9697 & 0.1175  & 0.1168  & 0.0752  & 0.0752  \\
    $C_{9} $ & 100 &  25 & 0.0424 & 0.7216 & 0.0509  & 0.0426  & 0.0426 & 0.0426 \\
    $C_{10} $ & 100 & 25 & 0.0447 & 0.6762 & 0.0435 & 0.0236 & 0.0236 &  0.0236 \\
    \hline
    $C_{11} $ & 100 &  25 & 0.0574 & 0.6121 & 0.0505 & 0.0092  & 0.0057  &  0.0057  \\
    $C_{12} $ & 100 &  25 & 0.0298 & 0.6763 & 0.0195 & 0.0137  &  0.0026 &  0.0023  \\
    $C_{13} $ & 100 &  25 & 0.0853  & 0.9386  & 0.0955  &  0.0930 & 0.0514  &  0.0514  \\
    $C_{14} $ & 100 &  25 & 0.0443 & 0.7356  & 0.0463  & 0.0375  & 0.0146  &  0.0139  \\
    $C_{15} $ & 100 &  25 & 0.0534 & 0.6781  & 0.0541  & 0.0341  & 0.0341  &    0.0341\\
    \hline
    $C_{16} $ & 100 &  25 &  0.3970 & 7.2480 & 0.0462 & 0.0461  & 0.0461  &  0.0461  \\
    $C_{17} $ & 100 &  25 & 0.3929 & 7.2532 & 0.0499 & 0.0498  &  0.0498 &  0.0498  \\
    $C_{18} $ & 100 &  25 & 0.3823  & 7.2506  & 0.0590  &  0.0590 & 0.0588  &  0.0588  \\
    $C_{19} $ & 100 &  25 & 0.3769 & 7.2391 & 0.0497  &  0.0497 & 0.0497  & 0.0496    \\
    $C_{20} $ & 100 &  25 & 0.3949 & 7.2591  & 0.0439  & 0.0438  &  0.0438 &    0.0438
\end{tabular}
    \caption{Entropy gaps: $s\sim n/4$}\label{tab:table4}
  \end{center}
\end{table}

\begin{table}[ht]
  \begin{center}
\begin{tabular}{c|c|c|c|c|c|c|c|c}
  $C$ & $n$ & $s$ & $\mbox{linx}(\frac{1}{2})$ & $\lambda(\frac{1}{2})$ & $\lambda(\Pi)$  & $\lambda(\Sigma)$ & $\mbox{linx}(\Sigma)$  & $\mbox{DP}(\Sigma)$      \\
  \hline
   $C_{63}^{-1}$          & 63 &  47 &  5.1504 & 5.1546  & 4.7257  & 4.1712  &  4.0911 & 4.0582 \\
   $C_{63}^{-1} + 0.125I$ & 63 &  47 &  5.0078 & 5.0120  &  4.5895 & 4.0436 & 3.9649 & 3.9323    \\
   $C_{63}^{-1} + 0.25I$  & 63 &  47 &  4.8736 & 4.8777  & 4.4614  & 3.9239  & 3.8467 &  3.8142  \\
   $C_{63}^{-1} + 0.375I$ & 63 &  47 &  4.7471 &  4.7512 & 4.3409  & 3.8115   & 3.7358 & 3.7035 \\
   $C_{63}^{-1} + 0.5I$   & 63 &  47 & 4.6268  & 4.6308 & 4.2265  & 3.7050 & 3.6307 & 3.6040 \\
   \hline
    $C_{1} $ & 100 &  75 &  14.1800 & 14.2440 & 12.9740  & 11.7390  & 11.2780  & 11.2780  \\
    $C_{2} $ & 100 &  75 &  12.8690 & 12.9300 & 11.8830  & 11.0540  & 10.4730  & 10.4730   \\
    $C_{3} $ & 100 &  75 & 12.0690 & 12.1220 & 10.8660 & 9.6651 & 8.9006 & 8.9006 \\
    $C_{4} $ & 100 &  75 & 13.5630 & 13.5810 & 12.2950 & 11.1710 & 10.2000 & 10.2000   \\
    $C_{5} $ & 100 &  75 & 12.9400 & 13.0260 & 11.7660 & 10.6340 & 9.6674  & 9.6674  \\
    \hline
    $C_{6} $ & 100 &  75 &  1.1464 & 2.2101 & 1.2271  & 0.7309  & 0.6028  & 0.6028  \\
    $C_{7} $ & 100 &  75 &  0.8360  & 1.6736  & 0.8169  & 0.4710  & 0.3384  &  0.3384  \\
    $C_{8} $ & 100 &  75 &   0.9222  & 1.9064 & 0.9357  & 0.5663  & 0.4240  & 0.4240  \\
    $C_{9} $ & 100 &  75 & 1.1488 & 2.1226 & 1.1956  & 0.8275  & 0.4835 & 0.4835 \\
    $C_{10} $ & 100 & 75 & 1.0013 & 1.8240 & 1.0901 & 0.7925 & 0.4964 &  0.4964 \\
    \hline
    $C_{11} $ & 100 &  75 & 0.9316 & 2.3084 & 0.9966 & 0.6492  & 0.4991  &  0.0584  \\
    $C_{12} $ & 100 &  75 & 0.8318 & 1.6619 & 0.8143 & 0.4657 &  0.3406 &  0.3406  \\
    $C_{13} $ & 100 &  75 & 0.9199  & 1.9245  & 0.9319  &  0.5658 & 0.4228  &  0.4228  \\
    $C_{14} $ & 100 &  75 & 1.1488 & 2.1226  & 1.2121  & 0.8175  & 0.5496  &  0.5496  \\
    $C_{15} $ & 100 &  75 & 0.9909 & 1.8258  & 1.0688  & 0.7949  & 0.4852  &    0.4852\\
    \hline
    $C_{16} $ & 100 &  75 &  6.8324 & 14.6930 & 8.4079 & 8.4078  & 8.4072  &  8.4072  \\
    $C_{17} $ & 100 &  75 & 6.8630 & 14.7360 & 8.4535 & 8.4533  &  8.4526 &  8.4526 \\
    $C_{18} $ & 100 &  75 & 6.8601  & 14.7380  & 8.4570  &  8.4570 & 8.4567  &  8.4562  \\
    $C_{19} $ & 100 &  75 & 6.8393 & 14.7240 & 8.4577  &  8.4573 & 8.4567  & 8.4567    \\
    $C_{20} $ & 100 &  75 & 6.8605 & 14.7390  & 8.4501 & 8.4500  &  8.4493 &   8.4493
\end{tabular}
    \caption{Entropy gaps: $s\sim 3n/4$}\label{tab:table5}
  \end{center}
\end{table}

\FloatBarrier

\bibliographystyle{alpha}

\bibliography{AL}

\end{document}